\theoremstyle{plain}
\newtheorem{theorem}{Theorem}[section]
\newtheorem{proposition}[theorem]{Proposition}
\newtheorem{lemma}[theorem]{Lemma}
\newtheorem{corollary}[theorem]{Corollary}
\newtheorem{conjecture}[theorem]{Conjecture}
\theoremstyle{definition} 
\newtheorem{definition}[theorem]{Definition}
\newtheorem{example}[theorem]{Example}
\theoremstyle{remark} 
\newtheorem{remark}[theorem]{Remark}
\numberwithin{equation}{section}
\numberwithin{figure}{section}
\begin{document}
\title[]{On the long neck principle and width estimates \\ for initial data sets}

\author{Daoqiang Liu}

\address{School of Mathematical Sciences,
     Capital Normal University, 100048,
     Beijing, China}
\email{\href{mailto:dqliumath@cnu.edu.cn}{dqliumath@cnu.edu.cn}}
\urladdr{\href{https://www.dqliu.cn}{www.dqliu.cn}}

\subjclass[2020]{Primary 53C21, 53C27, 53C50}

\date{}

\keywords{Callias operator, initial data set, long neck principle, width estimate}

\begin{abstract}
In this paper, we prove the long neck principle, band width estimates, and width inequalities of the geodesic collar neighborhoods of the boundary in the setting of general initial data sets for the Einstein equations, subject to certain energy conditions corresponding to the lower bounds of scalar curvature on Riemannian manifolds. Our results are established via the spinorial Callias operator approach. 
\end{abstract}

\maketitle


\section{Introduction}

In recent articles, Gromov proposed the following conjectures concerning the geometry of scalar curvature with a lower bound:

\begin{conjecture}[{\cite[p. 87, \text{Long neck problem}]{Gro19}}]\label{conj:long-neck-problem}
Let $(M,g_M)$ be a compact $m$-dimensional Riemannian manifold with boundary with scalar curvature ${\rm scal}_M\geq m(m-1)$. Suppose that $\Phi: M\to S^m$ is a smooth area non-increasing map which is locally constant near the boundary. If the distance with respect to the metric $g_M$
\begin{equation}
{\rm dist}_{g_M}({\rm supp}({\rm d}\Phi), \partial M) \geq \frac{\pi}{m},
\end{equation}
then ${\rm deg}(\Phi)=0$.
\end{conjecture}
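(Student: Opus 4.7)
The plan is to run a twisted-spinor / Callias-operator argument. Assume for concreteness that $m$ is even; the odd case is reduced by crossing with $S^{1}$ or by passing to a Clifford-linear version. Pull back the spinor bundle of the round sphere via $\Phi$ to form the Hermitian bundle with connection $E:=\Phi^{*}\mathcal{S}_{S^{m}}$ on $M$, and consider the twisted complex spinor bundle $\mathcal{S}_{M}\otimes E$ with its Dirac operator $D^{E}$. Because $\Phi$ is locally constant near $\partial M$, the bundle $E$ is flat and canonically trivialised on a collar of the boundary, so the twisting curvature $\mathcal{R}^{E}$ is supported inside $\mathrm{supp}(d\Phi)$.

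The Lichnerowicz formula $(D^{E})^{2}=\nabla^{*}\nabla+\tfrac{1}{4}\mathrm{scal}_{M}+\mathcal{R}^{E}$, together with Llarull's pointwise estimate $\mathcal{R}^{E}\geq -\tfrac{m(m-1)}{4}$ (valid because $\Phi$ is area non-increasing into $S^{m}$) and the hypothesis $\mathrm{scal}_{M}\geq m(m-1)$, yields $\tfrac{1}{4}\mathrm{scal}_{M}+\mathcal{R}^{E}\geq 0$ on $M$ and the strictly positive bound $\tfrac{m(m-1)}{4}$ off $\mathrm{supp}(d\Phi)$, where $E$ is flat. To upgrade mere non-negativity to invertibility, I would choose a parallel odd Hermitian involution $\Psi$ of $\mathcal{S}_{M}\otimes E$ that anticommutes with Clifford multiplication, a smoothing $\rho$ of the distance to $\mathrm{supp}(d\Phi)$, and form the Callias operator
\[
\mathcal{B}=D^{E}+i\,f(\rho)\Psi,\qquad \mathcal{B}^{2}=(D^{E})^{2}+f(\rho)^{2}+i[D^{E},f(\rho)\Psi].
\]
The cross term is pointwise dominated by $|f'(\rho)|$ since $|\nabla\rho|\leq 1$, so it suffices to pick $f$ satisfying a Riccati-type inequality $f^{2}\geq|f'|$ with $f(0)=0$ and $f(r)\to+\infty$ as $r\to \pi/m$; an explicit admissible choice (after normalisation) is $f(r)=\tfrac{m}{2}\tan(mr/2)$, whose maximal interval of existence is precisely $[0,\pi/m)$. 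The hypothesis $\mathrm{dist}(\mathrm{supp}(d\Phi),\partial M)\geq\pi/m$ is exactly what is needed to realise such an $f\circ\rho$ as a smooth function on all of $M$ and, together with the Weitzenböck bound above, to make $\mathcal{B}^{2}$ uniformly strictly positive.

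Under APS-type boundary conditions induced by the flatness of $E$ on the collar, the positivity of $\mathcal{B}^{2}$ makes $\mathcal{B}$ Fredholm with vanishing index. A relative / localisation argument of Callias--Anghel--Bunke type then identifies $\mathrm{ind}(\mathcal{B})$ with a nonzero universal multiple of $\deg(\Phi)$: on the neck the operator is invertible, so the entire topological contribution concentrates on the region where $d\Phi\neq 0$, where Llarull's model computation for the sphere applies. Hence $\deg(\Phi)=0$. The principal obstacle I expect is the Callias step: $\pi/m$ is the sharp blow-up time of the governing Riccati ODE, so any slackness in the pointwise estimates destroys the constant, and the analytic set-up -- choice of boundary conditions, regularisation of the distance function, and compatibility with the sharp Llarull curvature bound -- must be arranged so that every inequality is saturated exactly where it needs to be.
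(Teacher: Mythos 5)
Your overall route—pull back the spherical spinor bundle via $\Phi$, twist the Dirac operator, form a Callias operator whose potential is calibrated to the distance to ${\rm supp}({\rm d}\Phi)$, and identify a non-vanishing index with a multiple of $\deg(\Phi)$—is precisely the strategy behind the paper's Theorem~\ref{thm:IDS-long-neck-principle} and Corollary~\ref{cor:IDS-long-neck-relative-to-sphere} (inherited from Cecchini and Cecchini--Zeidler), specialised to $k=0$, $N=S^m$, $\mathcal{Q}=\frac{1}{2}m(m-1)$, which returns exactly the neck length $\pi/m$.  Note, though, that the statement you are proving is a \emph{conjecture} in the paper: the paper only establishes it under additional hypotheses ($M$ spin, $m$ even, strict neck inequality), and likewise your proposal requires a spin structure that Gromov's formulation does not assume.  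The packaging in the paper is slightly different from yours: rather than a single bundle $E=\Phi^*\mathcal{S}_{S^m}$ with an ad hoc involution $\Psi$, the paper uses a relative Gromov--Lawson pair $(\Phi^*\mathcal{S}_{S^m},\Psi^*\mathcal{S}_{S^m})$ with $\Psi$ locally constant, so the required parallel odd involution exists by construction outside ${\rm supp}({\rm d}\Phi)$.  This is cosmetic; the substance is the same.

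There is, however, a genuine gap in your Callias step.  You write that it suffices to find $f$ with $f^2\geq |f'|$, $f(0)=0$, and $f\to+\infty$ at distance $\pi/m$.  No such function exists: if $f^2\geq |f'|$ and $f(0)=0$, then ODE comparison forces $f\equiv 0$ (on any interval where $f>0$, $h=1/f$ satisfies $h'\geq -1$, so $h$ cannot blow up backward to a zero of $f$), so no finite-time blow-up can occur.  Your explicit choice $f(r)=\tfrac{m}{2}\tan(mr/2)$ indeed violates the inequality, since $f'=\tfrac{m^2}{4}+f^2$, hence $f^2-|f'|=-\tfrac{m^2}{4}$.  The correct requirement is $f^2-|f'|+\text{(curvature term)}\geq 0$, and the deficit $-\tfrac{m^2}{4}$ must be absorbed by the scalar-curvature contribution on the neck.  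Here is where the naive Lichnerowicz estimate fails by exactly a factor: $\tfrac{1}{4}{\rm scal}_M\geq\tfrac{m(m-1)}{4}<\tfrac{m^2}{4}$, so bounding $\mathcal{B}^2\geq \nabla^*\nabla+\tfrac{1}{4}{\rm scal}_M+f^2-|f'|$ and dropping $\nabla^*\nabla$ leaves a strictly negative lower bound.  To reach the sharp constant $\pi/m$ you must use the Friedrich/Penrose refinement: the Penrose operator identity $|\widetilde{\nabla}u|^2-\tfrac{1}{m}|\widetilde{\mathcal{D}}u|^2\geq 0$ promotes the curvature term to $\tfrac{m}{m-1}\cdot\tfrac{{\rm scal}_M}{4}\geq\tfrac{m^2}{4}$, after which the inequality closes with equality for the cotangent model.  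This is exactly what Proposition~\ref{pro:spectral-estimate} provides; without it your bound degrades to roughly $(\pi/m)\sqrt{(m-1)/m}$, which is Cecchini's original non-sharp constant, not the conjectured $\pi/m$.  Finally, the potential cannot be taken as a single Lipschitz function globally, because it must diverge on $\partial M$: the paper works with a sequence $f_j$ as in \eqref{eq:defn-potential}, extracts a nontrivial kernel element $u_j$ for each $j$, and passes to a weak limit in $H^1$, showing that the trace of $|u|$ on $\partial M$ vanishes before deriving the contradiction.  You should make this limiting argument explicit rather than positing a single $f$ and a single Fredholm problem with vanishing index.
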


\begin{conjecture}[{\cite[11.12, Conjecture C]{Gro18}}, Band width estimates]\label{conj:band-width}
Let $N$ be a closed manifold of dimension $m-1\neq 4$ such that $N$ does not admit a metric of positive scalar curvature. Let $g_M$ be a Riemannian metric $M=N\times [-1,1]$ such that scalar curvature ${\rm scal}_M \geq m(m-1)$. Then
\begin{equation}
{\rm width}(M, g_M)\leq \frac{2\pi}{m},
\end{equation}
where ${\rm width}(M,g_M):={\rm dist}_{g_M}(N\times \{-1\}, N\times \{1\} )$ is the distance with respect to $g_M$.
\end{conjecture}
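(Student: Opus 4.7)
The plan is to attack Conjecture \ref{conj:band-width} via a Callias-type Dirac operator on the band $M = N \times [-1,1]$, combining the index-theoretic obstruction coming from the fact that $N$ admits no metric of positive scalar curvature with the Bochner--Lichnerowicz positivity granted by $\mathrm{scal}_M \geq m(m-1)$.

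First, one invokes the characterization of closed non-PSC manifolds in dimensions $\neq 4$: by Stolz in the simply connected case and by Rosenberg, Schick and others in the general case (in the ranges where the Gromov--Lawson--Rosenberg conjecture is known), the hypothesis that $N^{m-1}$ admits no positive scalar curvature metric forces the Rosenberg index $\alpha(N) \in \mathrm{KO}_{m-1}(C^*_r \pi_1(N))$ to be non-zero; this is precisely where the exclusion $m-1 \neq 4$ enters. Equivalently, there is a flat $C^*$-algebra bundle $\mathcal{E} \to N$ whose twisted spin Dirac operator $D_N \otimes \mathcal{E}$ has non-vanishing higher index. Pulling $\mathcal{E}$ back to $M$ along the projection $M \to N$, and choosing a smooth odd function $f\colon (-1,1) \to \mathbb{R}$ with $f(t) \to \pm\infty$ as $t \to \pm 1$, I would form the Callias-type operator
\[
\mathcal{B} = D_M \otimes \mathrm{id}_{\mathcal{E}} + f(t)\,\sigma,
\]
where $\sigma$ is a self-adjoint odd bundle involution anti-commuting with $D_M$. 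A Callias index theorem (Anghel, Bunke, and in the $C^*$-algebraic setting Kubota) then identifies $\mathrm{ind}(\mathcal{B}) = \pm \alpha(N) \neq 0$, in particular $\ker \mathcal{B} \neq 0$.

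Next, the Bochner--Weitzenböck identity combined with the Lichnerowicz formula for $D_M \otimes \mathcal{E}$ (whose twisting curvature vanishes by flatness of $\mathcal{E}$) yields the pointwise estimate
\[
\mathcal{B}^2 \;\geq\; \tfrac{1}{4}\mathrm{scal}_M + f(t)^2 - |f'(t)| \;\geq\; \tfrac{m(m-1)}{4} + f(t)^2 - |f'(t)|.
\]
If the width of $M$ exceeded the conjectured bound $2\pi/m$, one would like to choose $f$ so that the right-hand side is strictly positive everywhere, forcing $\ker \mathcal{B} = 0$ and contradicting the index computation, thereby completing the proof.

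The main obstacle, and the conceptual heart of the conjecture, is the sharpness of the constant $2\pi/m$. The naive blow-up ODE $f' = m(m-1)/4 + f^2$ only produces a function on an interval of length $2\pi/\sqrt{m(m-1)} > 2\pi/m$, so a direct application of Callias merely recovers the weaker bound $2\pi/\sqrt{m(m-1)}$. Closing the gap to $2\pi/m$ appears to require either a warping/conformal modification of the Dirac bundle along the $t$-direction, engineered so that $D_M^2$ contributes an additional positive multiple of $f^2$ through a Kato-type improvement, or a reformulation in terms of $\mu$-bubbles (stable weighted hypersurfaces), which is the approach through which Gromov obtains the sharp constant in low dimensions. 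A clean Dirac-operator proof yielding $2\pi/m$ in all dimensions $m \neq 5$ is exactly what the conjecture asks for; the excluded case $m - 1 = 4$ runs into the open part of the Gromov--Lawson--Rosenberg conjecture and is a parallel obstacle.
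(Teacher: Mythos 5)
This is a conjecture the paper records but does not prove; what the paper actually establishes are initial-data-set variants (Theorems~\ref{thm:IDS-band-width-estimate} and \ref{thm:IDS-KO-band-width-estimate}) under additional topological hypotheses --- bands of infinite vertical $\widehat{A}$-area or $\mathcal{KO}$-bands --- precisely because Conjecture~\ref{conj:band-width} itself is open in the generality stated. Your proposal is in the right spirit (it is the Callias-operator approach of Cecchini--Zeidler and of this paper), and you are honest that you do not close it, but there are two points worth being precise about.

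First, and most seriously, the step ``$N$ admits no PSC metric in dimension $\neq 4$ $\Rightarrow$ the Rosenberg index $\alpha(N) \neq 0$'' is not a theorem; it is (essentially) the Gromov--Lawson--Rosenberg conjecture, which is known only for restricted classes of fundamental groups (trivial, finite with periodic cohomology, some others) and is in fact \emph{false} for certain groups by work of Schick. Gromov's exclusion of $m-1 = 4$ in the conjecture is not a GLR issue at all --- it reflects Seiberg--Witten obstructions peculiar to dimension four. So your proposal silently converts the hypothesis ``no PSC'' into the much stronger hypothesis ``$\alpha(N) \neq 0$'', which is exactly why the paper (and \cite{CZ22}, \cite{Zei20}) formulate their theorems for $\mathcal{KO}$-bands or bands of infinite vertical $\widehat{A}$-area rather than for arbitrary non-PSC $N$. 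With that hypothesis inserted, the rest of the index-theoretic setup is exactly what the paper does.

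Second, on the constant: you correctly diagnose that the naive Lichnerowicz--Callias estimate only yields width $\leq 2\pi/\sqrt{m(m-1)}$, and you guess that a ``Kato-type improvement'' is needed. The precise mechanism, which the paper uses, is the Friedrich/Penrose inequality $|\widetilde{\nabla} u|^2 - \tfrac{1}{m}|\widetilde{\mathcal{D}} u|^2 = |\widetilde{\mathcal{P}} u|^2 \geq 0$, which upgrades the Bochner term $\tfrac{1}{4}\mathrm{scal}_M$ (here $\tfrac{1}{2}(\mu - |\mathcal{J}|)$) to $\tfrac{m}{4(m-1)}\mathrm{scal}_M$ in the integrated estimate; this is the $\tfrac{m}{m-1}$ prefactor appearing throughout Proposition~\ref{pro:spectral-estimate} and equation~\eqref{eq:Dsquarespectral}. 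With that factor, the blow-up ODE has period $2\pi\sqrt{\tfrac{m-1}{2m\mathcal{Q}}}$, which equals $2\pi/m$ when $\mathcal{Q} = \tfrac{1}{2}m(m-1)$. So the ``gap'' you flag as conjectural is actually known and standard; the genuine gap in your proposal, and in any Dirac-operator attack on Conjecture~\ref{conj:band-width} as literally stated, is the unjustified leap from ``non-PSC'' to ``non-vanishing index.''
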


\begin{conjecture}[{\cite[11.12, Conjecture D']{Gro18}}, Geodesic collar neighborhood problem]\label{conj:geodesic-collar-neighborhood}
Let $W$ be a closed $m$-dimensional manifold such that $W$ minus a point does not admit a complete metric of positive scalar curvature. Let $M$ be the manifold with boundary obtained from $W$ by removing a small $m$-dimensional open ball. Let $g_M$ be a Riemannian metric on $M$ such that ${\rm scal}_{M}\geq {\rm scal}_0>0$. Then there exists a constant $c>0$ such that if there exists an open geodesic collar neighborhood $\mathcal{N}$ of width $\rho$, then
\begin{equation}
\rho\leq\frac{c}{\sqrt{{\rm scal}_0}}.
\end{equation} 
Here observe that $M$ is a manifold with boundary $\partial M=S^{m-1}$ which is homeomorphic to $S^{m-1}\times [0,1]$. For $\rho>0$ small enough, an open geodesic collar neighborhood $\mathcal{N}$ of width $\rho$ is the $\rho$-neighborhood of $\partial M$ with respect to $g_M$.
\end{conjecture}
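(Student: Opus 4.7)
The strategy is to use a Callias-type index theorem on $M$, following the framework developed by Cecchini and Zeidler for related scalar curvature rigidity problems. The topological hypothesis that $W\setminus\{\mathrm{pt}\}$ admits no complete metric of positive scalar curvature produces, via a relative index obstruction on the closed manifold $W$, a non-trivial $K$-theoretic class that must be inherited by any Dirac-type operator on $M$ equipped with appropriate boundary and asymptotic data. The geometric input on the collar will then force this index to vanish whenever $\rho$ is too large, yielding a contradiction.

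First I would set up the Callias operator. Let $r\colon\mathcal{N}\to[0,\rho)$ be the distance-to-boundary function on the geodesic collar (smooth, with $|\nabla r|=1$) and extend it to all of $M$ so that it is eventually constant outside a slight enlargement of $\mathcal{N}$. Choose a bounded monotone profile $f\colon\mathbb{R}\to\mathbb{R}$ with $f(0)=-\lambda$ and $f(\rho)=+\lambda$ for some $\lambda\sim\sqrt{\mathrm{scal}_0}$. On a twisted spinor bundle $S\otimes E$, with $E$ the coefficient bundle that encodes the obstruction class pulled back from $W$, form the Callias operator $\mathcal{D}=D+f(r)\sigma$, where $\sigma$ is a self-adjoint bundle endomorphism anticommuting with Clifford multiplication. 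The Lichnerowicz--Callias formula gives
\begin{equation}
\mathcal{D}^2 = D^2 + f(r)^2 + f'(r)\,c(\nabla r)\sigma \;\geq\; \tfrac{1}{4}\mathrm{scal}_M + f(r)^2 - |f'(r)|,
\end{equation}
so choosing $f$ with $|f'|\lesssim \mathrm{scal}_0/\lambda$ throughout the collar makes $\mathcal{D}^2$ strictly positive, hence $\mathcal{D}$ invertible.

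The next step is to identify the index. Imposing a chiral APS-type boundary condition on $\partial M$ compatible with $\sigma$, and exploiting the sign change of $f$ across the collar, the Callias index of $\mathcal{D}$ decomposes as a relative index between a Dirac operator on the interior of $M$ and one on a neighborhood of the boundary. The topological hypothesis on $W$ forces this relative index to be non-zero, contradicting invertibility of $\mathcal{D}$ whenever the combination of the required jump $2\lambda$ of $f$ over an interval of length $\rho$ is compatible with the slope bound; solving the resulting inequality yields the advertised estimate $\rho\leq c/\sqrt{\mathrm{scal}_0}$ for an explicit constant $c$. In dimension three I would run in parallel the alternative argument announced in the abstract: solve a Dirichlet problem for a spacetime harmonic function $u$ on $M$ with $u=0$ on $\partial M$ and $u=\rho$ on the inner boundary of the collar, apply the Stern integral identity on $M$, and use Gauss--Bonnet on the regular level sets of $u$ together with the topological hypothesis on $W$ (which bounds the Euler characteristics of those level sets) to recover the same inequality.

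The main obstacle I expect is the calibration of the Callias potential with the boundary condition on $\partial M$ so that the index extracts precisely the obstruction class inherited from $W\setminus\{\mathrm{pt}\}$ rather than a trivial or inaccessible class; matching the APS boundary projection on the boundary sphere with the relative index on the closed manifold $W$ requires delicate compatibility between the grading $\sigma$, the spin structure, and the chosen $E$. In the three-dimensional alternative the analogous difficulty is to maintain topological control on \emph{all} regular level sets of $u$ as they sweep through the collar, since wall-crossing or pinching of the level sets could a priori break the Euler characteristic bound on which the scalar curvature integral is leveraged.
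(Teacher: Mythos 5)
Your overall strategy---set up a Callias operator $\mathcal{B}_f = \mathcal{D} + f\sigma$ on $M$, extract a relative index obstruction from the topology of $W$, and trade the potential $f$ against the scalar curvature via a Lichnerowicz-type identity to force a width bound---is exactly the route the paper takes (its Theorem~\ref{thm:IDS-width-inequality}, specialized to $k=0$, $\mathcal{Q}=\tfrac12 m(m-1)$); the dimension-three harmonic-function alternative you mention matches Theorem~\ref{thm:harmonic-IDS-width-inequality}. Nevertheless there is one concrete gap in your calibration. The inequality you display,
\begin{equation*}
\mathcal{D}^2 \;\geq\; \tfrac14\,\mathrm{scal}_M + f(r)^2 - |f'(r)|,
\end{equation*}
is correct for the \emph{untwisted} spin Dirac operator, but your index argument requires twisting by a bundle $E$ carrying the obstruction, and then the Bochner remainder acquires the curvature endomorphism $\mathscr{R}^{E}$, which for a fixed $E$ has no sign. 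The twist that makes the relative index nonzero is, a priori, the same twist that can destroy positivity. What makes the method work (Lemma~\ref{lem:technical-lemma}, following Cecchini--Zeidler) is that under the assumption that the double of $M$ has infinite $\widehat{A}$-area one can choose $E$ with $\|\mathscr{R}^E\|_\infty$ arbitrarily small while keeping the relative index nonzero, and then pass to the limit $\|\mathscr{R}^E\|_\infty\to 0$. Your sketch never identifies this step, and relatedly a bare ``$W\setminus\{\mathrm{pt}\}$ admits no complete PSC metric'' does not by itself produce a Dirac index obstruction; some spin/$\widehat{A}$-area-type hypothesis is needed, which is why the paper's theorem is stated with that explicit hypothesis rather than for the literal conjecture.

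Two further points where your analytic setup differs and would need repair. First, you use a bounded sign-changing linear profile $f(0)=-\lambda$, $f(\rho)=+\lambda$; but on a compact manifold with a single boundary component there is no transmission hypersurface for the sign to cross, and the role played by the sign of $f$ at an end in classical Callias theory is taken over by the chirality boundary condition $s\,c(\nu^\flat)\sigma u = u$ on $\partial M$. The paper instead uses a one-sided cotangent profile $f_j$ that is $0$ in the interior and blows up to $+\infty$ at $\partial M$ as $j\to\infty$; this is not cosmetic, since it makes $f_j^2 - |\mathrm{d} f_j|$ exactly equal to the constant $-\tfrac{\pi^2}{4\omega^2}$ and thereby produces the sharp threshold, and the blow-up at $\partial M$ is what controls the boundary term (one shows $\int_{\partial M}\Upsilon_j|u_j|^2$ stays bounded while $\Upsilon_j\to\infty$, forcing the boundary trace of the limit to vanish). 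Second, the paper combines Bochner with the Friedrich/Penrose inequality to replace $\tfrac14\mathrm{scal}_M$ by $\tfrac{m}{4(m-1)}\mathrm{scal}_M$ (Proposition~\ref{pro:spectral-estimate}); your naive Lichnerowicz loses this factor and therefore produces a weaker constant. That is harmless for the qualitative conjecture, but worth noting since the paper's argument is designed to hit the optimal $\pi\sqrt{(m-1)/(2m\mathcal{Q})}$.
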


The major progress on Conjectures \ref{conj:long-neck-problem}, \ref{conj:band-width} and \ref{conj:geodesic-collar-neighborhood} was made by Cecchini \cite{Cec20}, Zeidler \cite{Zei20,Zei22}, Cecchini-Zeidler \cite{CZ22} via spinorial methods involving Callias operators. On the other hand, Conjecture \ref{conj:band-width} was first proved by Gromov \cite{Gro18} for torical bands and by Zhu \cite{Zhu21} for overtorical bands and extended by R\"{a}de \cite{Rad22} for more general warped products using a version of the minimal surface approach called $\mu$-bubble techniques in dimension $n\leq 7$. Moreover, Guo-Xie-Yu \cite{GXY23} proved similar (non-sharp) band estimates by quantitative K-theory. In \cite{HKKZ22}, Hirsch, Kazaras, Khuri and Zhang gave band width estimates in dimension three by spacetime harmonic functions. Before that, Chai-Wan proved results of this type for initial data sets with constant mean curvature in \cite{CW22}. Recently, Hirsch-Kazaras-Khuri-Zhang \cite{HKKZ23} established various spectral band width inequalities. With the spirit to \cite{CZ22,HKKZ23}, the author also gave spectral results of Conjectures \ref{conj:long-neck-problem} and \ref{conj:geodesic-collar-neighborhood} again via spinors in \cite{Liu23}.

The main purpose of this paper is to prove the variations of the results about Conjectures \ref{conj:long-neck-problem}, \ref{conj:band-width} and \ref{conj:geodesic-collar-neighborhood} in the setting of initial data sets for the Einstein equations. In the first part of the paper, we will prove the following theorem that affirms the long neck principle in \cite{Liu23} for an initial data set under the relative dominant energy condition, relative to a background manifold. We refer to Sections \ref{sec:preliminaries} and \ref{sec:long-neck-principle} for precise statements of terms used below.
\begin{theorem}\label{thm:IDS-long-neck-principle}
    Let $(N, g_N)$ be a compact $n$-dimensional Riemannian manifold with non-negative curvature operator $\mathcal{R}_N\geq 0$ on $\Lambda^2TN$, where $n\geq 2$ even. Let $(M, g_M, k)$ be a compact $m$-dimensional spin initial data set (Definition~\ref{defn:IDS}) with non-empty boundary such that $\mu-|J|_{g_M}\geq \mathcal{Q} >0$ on $M\setminus {\rm supp}({\rm d}\Phi)$, where $m=n+4t$ for $t\in\mathbb{Z}_{\geq 0}$, $\mathcal{Q}$ is a constant and $\Phi: M\to N$ is a smooth spin map which is locally constant near the boundary $\partial M$. Assume that
\begin{itemize}
    \item[(i)] the Euler characteristic $\chi(N)$ of $N$ is non-vanishing;
    \item[(ii)] the relative dominant energy condition (Definition~\ref{defn:relative-DEC})
    \begin{equation}\label{eq:scalar-comparison}
      \mu-|J|_{g_M} \geq \frac{1}{2} ( {\rm scal}_N \circ \Phi ) \cdot \| \Lambda^2 {\rm d}\Phi \|
    \end{equation}
    on ${\rm supp}({\rm d}\Phi)$;  
    \item[(iii)] the length of the neck
    \begin{equation}\label{eq:spectral-bound-of-neck}
       {\rm dist}_{g_M}({\rm supp}({\rm d}\Phi),\partial M) > \pi \sqrt{\frac{m-1}{2 m \mathcal{Q} }}.
    \end{equation}
\end{itemize}
Then ${\rm deg}_{\widehat{A}}(\Phi)=0$.
\end{theorem}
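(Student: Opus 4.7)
The plan is to adapt the Callias operator approach of \cite{Cec20, CZ22, Liu23} to the initial data set setting by replacing the ordinary twisted Dirac operator with the Dirac--Witten operator, so that the quantity $\mu - |\mathcal{J}|_{g_M}$ plays the role of the scalar curvature in the Weitzenböck formula. First I would form a twisted spinor bundle $\mathcal{E} = S_M \otimes \Phi^* F$, where $F \to N$ is a Hermitian bundle chosen so that its curvature is pointwise dominated by $\tfrac{1}{2}({\rm scal}_N) \cdot \|\Lambda^2 d\Phi\|$ on ${\rm supp}(d\Phi)$, while the twisted index of a Callias operator on $\mathcal{E}$ computes $\deg_{\widehat{A}}(\Phi)$. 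Hypothesis (i) together with $\mathcal{R}_N \geq 0$ allows $F$ to be extracted from $TN$ (in the spirit of the bundles appearing in the index-theoretic proof of Gauss--Bonnet), and the dimension condition $m = n + 4t$ is what makes the resulting $\widehat{A}$-type pairing non-trivial.

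The key operator is a Callias--Dirac--Witten operator
\[
\hat{D}_f \;=\; \sum_i e_i \cdot \bigl( \nabla^{\mathcal{E}}_{e_i} + \tfrac{1}{2} k(e_i, \cdot) \cdot e_0 \cdot \bigr) \,+\, f \cdot c,
\]
where $e_0$ is the auxiliary timelike Clifford generator of the Dirac--Witten correction, $c$ is a chirality symbol, and $f : M \to \RR$ is a smooth potential depending only on ${\rm dist}_{g_M}(\cdot, \partial M)$. Squaring and pairing with a spinor $\psi$ yields a Weitzenböck estimate whose zeroth-order part is controlled below by $\tfrac{1}{2}(\mu - |\mathcal{J}|_{g_M}) + \mathcal{R}^F + f^2 - |\nabla f|$, using the pointwise bound $|\mathcal{J} \cdot e_0 \cdot \psi| \leq |\mathcal{J}|_{g_M}|\psi|$. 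On ${\rm supp}(d\Phi)$ the hypothesis (ii) cancels $\mathcal{R}^F$ against $\tfrac{1}{2}(\mu - |\mathcal{J}|_{g_M})$; on $M \setminus {\rm supp}(d\Phi)$ the twist curvature vanishes and, via the refined Kato inequality for spinors, pointwise positivity reduces to a Riccati-type inequality for $f$ whose maximal existence length is precisely the threshold $\pi \sqrt{(m-1)/(2m\mathcal{Q})}$ appearing in (iii); a small perturbation argument then delivers an admissible $f$ under the strict inequality.

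Finally, I would impose an APS-type chiral boundary condition on $\partial M$, with the chirality chosen according to the sign of ${\rm tr}_{g_M} k$: integration by parts for the Dirac--Witten operator produces a boundary term proportional to $\int_{\partial M} {\rm tr}_{g_M} k \cdot |\psi|^2$, and the two bulleted alternatives in the theorem correspond exactly to the two sign choices that make this term have the favorable orientation. With the correct boundary condition, $\hat{D}_f$ becomes self-adjoint and Fredholm with trivial kernel by the pointwise positivity just established, so its Callias index vanishes; identifying this index with $\deg_{\widehat{A}}(\Phi)$ completes the argument. The hardest step I anticipate is the compatibility check between the chiral boundary projector and the zeroth-order Dirac--Witten correction $\tfrac{1}{2} k(\nu,\cdot)\cdot e_0 \cdot$ at $\partial M$ — the very appearance of two distinct hypotheses on ${\rm tr}_{g_M} k$ already signals that this interaction is subtle. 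A secondary difficulty is recovering the sharp constant in (iii), for which the refined Kato inequality must be combined carefully with the specific form of the Callias Weitzenböck remainder.
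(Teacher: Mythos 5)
Your high-level strategy is aligned with the paper's: replace the twisted Dirac operator with a Dirac--Witten (Callias) operator so that $\mu - |\mathcal{J}|_{g_M}$ plays the role of $\tfrac{1}{4}\mathrm{scal}_M$ in the Weitzenb\"{o}ck identity, use a cotangent-type potential whose maximal interval of existence produces the constant in (iii), and derive a contradiction from an index that is forced to be nonzero by the $\widehat{A}$-degree. The sharp factor $m/(m-1)$ via the Friedrich/refined Kato inequality is also the route the paper takes (Proposition~\ref{pro:spectral-estimate}). But there are two genuine gaps.

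First, the Dirac bundle you construct is not the right object. You propose a single twisted bundle $\mathcal{E} = \mathcal{S}_M \otimes \Phi^* F$ for a suitable $F \to N$. On the even-dimensional manifold $M$ ($m=n+4t$, $n$ even), a single twisted spinor bundle does not by itself furnish the parallel self-adjoint involution $\sigma$ that the Callias machinery (and the elliptic boundary condition $sc(\nu^{\flat})\sigma u = u$) rests on; you have named a ``chirality symbol'' $c$ but given no construction. The paper instead builds a \emph{relative Gromov--Lawson pair} $(\Phi^*\mathcal{S}_N, \Psi^*\mathcal{S}_N)$, forming $S = \mathcal{S}_M \widehat{\otimes} (\Phi^*\mathcal{S}_N \oplus (\Psi^*\mathcal{S}_N)^{\rm op})$ with the involution supplied by a parallel bundle isomorphism away from $\supp(d\Phi)$; this is where $\chi(N) \neq 0$ and $\mathcal{R}_N \geq 0$ get used via Lemma~\ref{lem:relative-GL-pair}, and where the nonvanishing of ${\rm indrel}(M;\mathcal{E},\mathcal{F})$ (equated to $\deg_{\widehat{A}}(\Phi)\neq 0$) is established. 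Without the pair and its involution, the Fredholm theory, the boundary condition, and the identification of the index with $\deg_{\widehat{A}}(\Phi)$ are all unsupported.

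Second, you misplace the role of the sign of ${\rm tr}_{g_M} k$: you want to encode it in the choice of chiral boundary projector, claiming the boundary term is proportional to $\int_{\partial M} {\rm tr}_{g_M} k\, |\psi|^2$. In the paper the boundary sign $s=-1$ is used in \emph{both} cases, and what the $\partial M$-term actually involves is $\tfrac{\theta_+}{2} + \tfrac{m-1}{m}f$ (i.e. the outward null expansion), which is beaten by sending the potential to $+\infty$ on $\partial M$ along a sequence $f_j$. The ${\rm tr}_{g_M}k$ hypothesis instead governs the \emph{bulk} cross term $f_j \,{\rm tr}_{g_M}k\,|u_j|^2$ appearing in \eqref{spectralestimate}: when ${\rm tr}_{g_M}k \geq 0$ and $f_j \geq 0$ this is harmless, but when ${\rm tr}_{g_M}k \leq 0$ the paper has to switch to a differently scaled potential $\widetilde{f}_j$ (with modified interval length) and to deform the data to $\widetilde{k} = k - \widetilde{f}_j g_M$, recomputing $\widetilde{\mu}$ and $\widetilde{\mathcal{J}}$ as in \eqref{eq:deformed-energy-momentum} so that the extra terms can be absorbed. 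Your proposal contains no analogue of this deformation; a Riccati-type positivity argument with a single fixed $f$ and a boundary chirality swap will not absorb the sign-indefinite bulk term $f\,{\rm tr}_{g_M}k$, so the $\mathrm{tr}_{g_M}k \leq 0$ case would not close.
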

In particular, we deduce the following direct consequences of Theorem \ref{thm:IDS-long-neck-principle} by applying \cite[Theorem~0.1]{GS02}.

\begin{corollary}\label{cor:IDS-long-neck-relative-to-symmetric-spaces}
Let $(N,g_N)$ be a compact $n$-dimensional Riemannian manifold with non-negative curvature operator $\mathcal{R}_N\geq 0$ on $\Lambda^2TN$, where $n\geq 2$ even, such that the universal covering of $N$ is homeomorphic to a symmetric space $G/H$ of compact type with ${\rm rank}\ G={\rm rank}\ H$. Let $(M,g_M)$ be a compact $m$-dimensional spin initial data set with non-empty boundary such that $\mu-|J|_{g_M}\geq \mathcal{Q} >0$ on $M\setminus {\rm supp}({\rm d}\Phi)$, where $m=n+4t$ for $t\in\mathbb{Z}_{\geq 0}$, $\mathcal{Q}$ is a constant and $\Phi: M\to N$ is a smooth, area non-increasing, spin map which is locally constant near the boundary $\partial M$. Assume that the relative dominant energy condition $\mu-|J|_{g_M} \geq \frac{1}{2} ( {\rm scal}_N \circ \Phi ) \cdot \| \Lambda^2 {\rm d}\Phi \|$ on ${\rm supp}({\rm d}\Phi)$ and the length of the neck
    \begin{equation}
       {\rm dist}_{g_M}({\rm supp}({\rm d}\Phi),\partial M) > \pi \sqrt{\frac{m-1}{2 m \mathcal{Q} }}.
    \end{equation}
Then ${\rm deg}_{\widehat{A}}(\Phi)=0$.  
\end{corollary}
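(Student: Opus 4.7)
The plan is to deduce this corollary directly from Theorem~\ref{thm:IDS-long-neck-principle} by checking that every hypothesis of that theorem is met. The conditions on the trace of $k$, the energy lower bound $\mu-|\mathcal{J}|_{g_M}\geq\mathcal{Q}>0$ off ${\rm supp}({\rm d}\Phi)$, the dimension relation $m=n+4t$, the assumption that $\Phi$ is a spin map locally constant near $\partial M$, the relative dominant energy condition of hypothesis~(ii), and the neck length bound of hypothesis~(iii) are all imposed verbatim in the statement of the corollary. Consequently, the only hypothesis of Theorem~\ref{thm:IDS-long-neck-principle} requiring additional justification is~(i), namely the non-vanishing of $\chi(N)$.

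To verify $\chi(N)\neq 0$ I would invoke \cite[Theorem~0.1]{GS02}, whose conclusion is precisely that a compact Riemannian manifold $N$ with non-negative curvature operator, whose universal covering is a compact symmetric space $G/H$ of compact type with ${\rm rank}\ G = {\rm rank}\ H$, satisfies $\chi(N)\neq 0$. At bottom this reduces to the classical Hopf--Samelson formula $\chi(G/H)=|W_G|/|W_H|>0$ in the equal-rank case, combined with multiplicativity of the Euler characteristic under finite covers: since the fundamental group of any compact symmetric space of compact type is finite, one has $\chi(\tilde{N}) = |\pi_1(N)|\cdot\chi(N)$, so $\chi(\tilde{N})\neq 0$ forces $\chi(N)\neq 0$.

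Once hypothesis~(i) is in hand, Theorem~\ref{thm:IDS-long-neck-principle} yields ${\rm deg}_{\widehat{A}}(\Phi)=0$ at once, so the proof is a matter of unpacking definitions and citing \cite{GS02}. I do not anticipate a substantive obstacle in this reduction: the area non-increasing hypothesis on $\Phi$ merely guarantees $\|\Lambda^2{\rm d}\Phi\|\leq 1$, which is entirely compatible with---and effectively absorbed into---hypothesis~(ii), and all of the genuinely nontrivial analytic content (the Callias-operator index calculation together with the spinorial estimate controlling the neck length) has already been carried out in the proof of Theorem~\ref{thm:IDS-long-neck-principle}. The only conceivable subtlety is the passage from $\tilde{N}=G/H$ to $N$, which is dispatched by the finite-cover observation above.
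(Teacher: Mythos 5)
Your reduction is correct and matches the paper's implicit approach: every hypothesis of Theorem~\ref{thm:IDS-long-neck-principle} except the non-vanishing of $\chi(N)$ appears verbatim in Corollary~\ref{cor:IDS-long-neck-relative-to-symmetric-spaces}, and your Hopf--Samelson argument ($\chi(G/H)=|W_G|/|W_H|>0$ in the equal-rank compact case) combined with $\chi(\tilde N)=|\pi_1(N)|\cdot\chi(N)$ for the finite cover settles that final point. A minor caveat on attribution: \cite[Theorem~0.1]{GS02} is a Llarull-type scalar-curvature rigidity statement for area non-increasing spin maps into such $N$ rather than literally the assertion that $\chi(N)\neq 0$ (which is why the corollary carries the area non-increasing hypothesis from that setting), but this does not affect your argument since the Hopf--Samelson and finite-cover reasoning you supply establishes $\chi(N)\neq 0$ independently.
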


\begin{corollary}\label{cor:IDS-long-neck-relative-to-sphere}
Let $(M, g_M, k)$ be a compact $m$-dimensional spin initial data set with boundary such that $\mu-|J|_{g_M}\geq \mathcal{Q} >0$ on $M\setminus {\rm supp}({\rm d}\Phi)$, where $m\geq 2$ even, $\mathcal{Q}$ is a constant, and $\Phi:M\to S^m$ is a smooth area non-increasing map. Assume that the relative dominant energy condition $\mu-|J|_{g_M} \geq \frac{m(m-1)}{2}$ on ${\rm supp}({\rm d}\Phi)$ and the length of the neck
    \begin{equation}
       {\rm dist}_{g_M}({\rm supp}({\rm d}\Phi),\partial M) > \pi \sqrt{\frac{m-1}{2 m \mathcal{Q} }}.
    \end{equation}
Then ${\rm deg}(\Phi)=0$. 
\end{corollary}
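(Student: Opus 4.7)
The plan is to derive Corollary \ref{cor:IDS-long-neck-relative-to-sphere} as a direct specialization of Theorem \ref{thm:IDS-long-neck-principle} with the target taken to be the round sphere $(S^m,g_{\mathrm{round}})$, so that $n=m$ and $t=0$ in the dimensional relation $m=n+4t$. First I would check the geometric hypotheses on $N=S^m$: the round sphere has strictly positive curvature operator, hence $\mathcal{R}_{S^m}\geq 0$ on $\Lambda^2 TS^m$; its scalar curvature is the constant $\mathrm{scal}_{S^m}=m(m-1)$; and for even $m$ the Euler characteristic is $\chi(S^m)=2\neq 0$. Since $M$ is assumed spin, $\Phi:M\to S^m$ is automatically a smooth spin map, and the hypothesis on $\mathrm{tr}_{g_M}k$ and on $\mu-|\mathcal{J}|_{g_M}$ outside $\mathrm{supp}({\rm d}\Phi)$ is identical on both sides.

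Second, I would translate the energy condition. Because $\Phi$ is area non-increasing we have $\|\Lambda^2{\rm d}\Phi\|\leq 1$ pointwise, so the hypothesis $\mu-|\mathcal{J}|_{g_M}\geq \tfrac{m(m-1)}{2}$ on $\mathrm{supp}({\rm d}\Phi)$ implies
\begin{equation}
\mu-|\mathcal{J}|_{g_M}\;\geq\;\tfrac{1}{2}\,(\mathrm{scal}_{S^m}\circ\Phi)\cdot\|\Lambda^2{\rm d}\Phi\|,
\end{equation}
which is exactly the relative dominant energy condition \eqref{eq:scalar-comparison} of Theorem \ref{thm:IDS-long-neck-principle} with $N=S^m$. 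The length-of-neck condition \eqref{eq:spectral-bound-of-neck} coincides verbatim with the one assumed in the corollary, so Theorem \ref{thm:IDS-long-neck-principle} applies and yields $\deg_{\widehat{A}}(\Phi)=0$.

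The last step is to identify $\deg_{\widehat{A}}(\Phi)$ with the ordinary topological degree $\deg(\Phi)$ in this specific dimensional regime. Since $\dim M=\dim N=m$ (the case $t=0$), the $\widehat{A}$-degree used in Theorem \ref{thm:IDS-long-neck-principle} reduces to $\deg_{\widehat{A}}(\Phi)=\langle\Phi^{*}[\omega_{S^m}]\cup\widehat{A}_0(TM),[M]\rangle$, where $\omega_{S^m}$ is the fundamental cohomology class of $S^m$ and $\widehat{A}_0(TM)=1$ is the degree-zero component. Hence $\deg_{\widehat{A}}(\Phi)=\langle\Phi^{*}[\omega_{S^m}],[M]\rangle=\deg(\Phi)$, and we conclude $\deg(\Phi)=0$.

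There is no substantial analytic obstacle, since the Callias-operator work is already carried out in the proof of Theorem \ref{thm:IDS-long-neck-principle}; the only delicate point is the bookkeeping of step two, namely verifying that the pointwise estimate $\|\Lambda^2{\rm d}\Phi\|\leq 1$ coming from the area non-increasing hypothesis together with $\mathrm{scal}_{S^m}=m(m-1)$ exactly reproduces the relative dominant energy condition. Once that is confirmed, the corollary follows immediately.
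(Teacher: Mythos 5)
Your proposal is correct and matches the paper's intent: the corollary is derived by specializing Theorem~\ref{thm:IDS-long-neck-principle} to $N=S^m$ with $t=0$, using $\mathcal{R}_{S^m}>0$, $\chi(S^m)=2$, ${\rm scal}_{S^m}=m(m-1)$, $\|\Lambda^2{\rm d}\Phi\|\leq 1$ from the area non-increasing hypothesis, and the identity $\deg_{\widehat A}(\Phi)=\deg(\Phi)$ when $\dim M=\dim N$. (The positive neck length automatically makes $\Phi$ locally constant near $\partial M$, so that hypothesis of Theorem~\ref{thm:IDS-long-neck-principle} is satisfied as well.)
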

\begin{remark}
When $k \equiv 0$, Corollary \ref{cor:IDS-long-neck-relative-to-sphere} implies Theorem~A in \cite{Cec20}, in fact, whose odd-dimensional case can be obtained by considering the product manifold $M \times S^1$. 
\end{remark}

Secondly, our results concern band width estimates in terms of the lower bound of the dominant energy scalar $\mu-|J|_{g_M}$. For the definition of the bands of infinite vertical $\widehat{A}$-area in Theorem \ref{thm:IDS-band-width-estimate}, we refer to Section \ref{sec:band-width-estimates}.
\begin{theorem}\label{thm:IDS-band-width-estimate}
Let $(M, g_M, k)$ be a compact $m$-dimensional initial data set with boundary such that $\mu-|J|_{g_M}\geq \mathcal{Q} >0$ and $(M, g_M)$ be a spin band of infinite vertical $\widehat{A}$-area, where $m\geq 3$ odd and $\mathcal{Q}$ is a constant.
Suppose that $M$ has a smooth closed hypersurface $\Sigma$ separating $M$ into two parts $M_{-}$ and $M_{+}$ such that ${\rm dist}_{g_M}(p,\partial_{-}M)={\rm dist}_{g_M}(p,\partial_{+}M)$ for every $p\in \Sigma$. Assume that ${\rm tr}_{g_M}k$ has different signs or equals to zero on $M_{\pm}$.
Then
\begin{equation}
{\rm width}(M, g_M, k) \leq 2\pi \sqrt{\frac{m-1}{2m\mathcal{Q}}}.
\end{equation}
\end{theorem}
\begin{remark}
If $k\equiv 0,\mathcal{Q}=\frac{1}{2}m(m-1)$, Theorem \ref{thm:IDS-band-width-estimate} yields Corollary~7.9 in \cite{CZ22}. In particular, it recovers Gromov's torical band inequality with the optimal constant $\frac{2\pi}{m}$ for all dimensions, see \cite[pp. 653]{Gro18}.
\end{remark}

The similar method also gives the following width estimates for $\mathcal{KO}$-bands (see Definition \ref{defn:KO-band}). 
\begin{theorem}\label{thm:IDS-KO-band-width-estimate}
Let $(M, g_M, k)$ be a compact $m$-dimensional initial data set with boundary such that $\mu-|J|_{g_M}\geq \mathcal{Q} >0$ and $(M, g_M)$ be a $\mathcal{KO}$-band, where $m\geq 3$ odd and $\mathcal{Q}$ is a constant. 
Suppose that $M$ has a smooth closed hypersurface $\Sigma$ separating $M$ into two parts $M_{-}$ and $M_{+}$ such that ${\rm dist}_{g_M}(p,\partial_{-}M)={\rm dist}_{g_M}(p,\partial_{+}M)$ for every $p\in \Sigma$. Assume that ${\rm tr}_{g_M}k$ has different signs or equals to zero on $M_{\pm}$.
Then
\begin{equation}
{\rm width}(M, g_M, k) \leq 2\pi \sqrt{\frac{m-1}{2m\mathcal{Q}}}.
\end{equation}
\end{theorem}
\begin{remark}
When $k\equiv 0,\mathcal{Q}=\frac{1}{2}m(m-1)$, Theorem \ref{thm:IDS-KO-band-width-estimate} recovers \cite[Proposition~5.5]{Zei20}.
\end{remark}



Finally, we now turn to our results corresponding to Conjecture \ref{conj:geodesic-collar-neighborhood}. Using Callias operators, we also prove the following result.
\begin{theorem}\label{thm:IDS-width-inequality}
Let $(M,g_M, k)$ be a compact $m$-dimensional spin initial data set with boundary such that the double of $M$ has infinite $\widehat{A}$-area. Let $\mathcal{N}$ be an open geodesic collar neighborhood of the boundary $\partial M$. Assume that $\mu-|J|_{g_M} \geq \mathcal{Q}>0$ on $\mathcal{N}$, where $\mathcal{Q}$ is a constant. Then the width of $\mathcal{N}$ satisfies
\begin{equation}
{\rm width}(\mathcal{N}) \leq  \pi\sqrt{\frac{m-1}{2m\mathcal{Q}}}.
\end{equation}
\end{theorem}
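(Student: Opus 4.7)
The plan is to argue by contradiction via the Callias operator technique, extending the band-width method of Theorem~\ref{thm:IDS-band-width-estimate} to the collar setting. Suppose that $\rho:={\rm width}(\mathcal{N})>\pi\sqrt{(m-1)/(2m\mathcal{Q})}$. Form the Riemannian double $\widehat{M}=M\cup_{\partial M}M'$; by hypothesis $\widehat{M}$ is a closed spin manifold of infinite $\widehat{A}$-area, and the doubled collar $\mathcal{N}\cup_{\partial M}\mathcal{N}'$ is isometric to a band of width $2\rho$ about the seam. Infinite $\widehat{A}$-area yields, for every $\epsilon>0$, a Hermitian bundle $(E_\epsilon,\nabla^{E_\epsilon})$ on $\widehat{M}$ with $\|R^{E_\epsilon}\|_{L^\infty}<\epsilon$ and non-vanishing twisted $\widehat{A}$-genus, so that the corresponding twisted Dirac index on $\widehat{M}$ is non-zero.

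Next, I would extend the symmetric $2$-tensor $k$ to $\widehat{M}$ in a way adapted to the two alternative hypotheses. In the first case (${\rm tr}_{g_M}k\geq 0$ on $\mathcal{N}$), extend $k$ on the doubled collar by the reflection that preserves ${\rm tr}_{g_M}k$ and cut off to zero outside $\mathcal{N}\cup\mathcal{N}'$; in the second case (${\rm tr}_{g_M}k\leq 0$ on all of $M$), extend $k$ to all of $\widehat{M}$ by the sign-reversing reflection. Either way the Dirac--Witten operator $\mathcal{D}^{E_\epsilon}$ is well-defined on $\widehat{M}$ (with $k$ Lipschitz across the seam), and a refined Lichnerowicz--Weitzenböck identity yields
\[
(\mathcal{D}^{E_\epsilon})^*\mathcal{D}^{E_\epsilon}\geq\nabla^*\nabla+\tfrac{m}{2(m-1)}(\mu-|\mathcal{J}|_{g_M})-c_1\|R^{E_\epsilon}\|,
\]
with the ${\rm tr}_{g_M}k$-dependent contribution having the favorable sign thanks to the matching between case and extension.

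I would then form the twisted Callias operator $\mathcal{D}_f=\mathcal{D}^{E_\epsilon}+f\cdot\sigma$, where $\sigma$ is a parallel bundle involution anticommuting with Clifford multiplication, and $f\colon\widehat{M}\to\mathbb{R}$ depends only on the signed distance $r$ to $\partial M$, is antisymmetric across the seam, and equals a suitably large constant outside $\mathcal{N}\cup\mathcal{N}'$. Taking $f$ on $[0,\rho]$ to solve the Riccati ODE $f'=f^2+c$ with $c=\tfrac{m\mathcal{Q}}{2(m-1)}$ and $f(0)=0$ gives $f(r)=\sqrt{c}\tan(\sqrt{c}\,r)$, which remains finite on $|r|<\pi/(2\sqrt{c})=\pi\sqrt{(m-1)/(2m\mathcal{Q})}<\rho$, so $f$ can be truncated strictly before blow-up. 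A Callias--Weitzenböck computation then gives
\[
\mathcal{D}_f^*\mathcal{D}_f\geq\nabla^*\nabla+\tfrac{m}{2(m-1)}(\mu-|\mathcal{J}|_{g_M})+f^2-|f'|-c_1\|R^{E_\epsilon}\|,
\]
which, using compactness of $M$ to absorb any negative curvature contribution outside $\mathcal{N}\cup\mathcal{N}'$ into $f^2$ (by making the outer constant of $f$ large enough), is bounded below by some $\delta>0$ once $\epsilon$ is chosen sufficiently small. Hence $\ker\mathcal{D}_f=0$, contradicting the non-vanishing twisted index.

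The main obstacle I expect is coordinating the two sign regimes for ${\rm tr}_{g_M}k$ with the extensions and the Callias symmetry $\sigma$: the reflection used to extend $k$ dictates the sign of the ${\rm tr}_{g_M}k$-term in the Dirac--Witten Weitzenböck formula, and one must verify simultaneously that (i) the extension preserves $\mu-|\mathcal{J}|_{g_M}\geq\mathcal{Q}$ on the doubled collar, (ii) the $k$-correction is non-negative where needed (in particular, the first-case extension must not produce a bad contribution on $M\setminus\mathcal{N}$ where the dominant energy bound fails), and (iii) the Callias potential $f$ and the symmetry $\sigma$ are consistently defined across the seam so that $\mathcal{D}_f$ is genuinely self-adjoint. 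This bookkeeping parallels the band case of Theorem~\ref{thm:IDS-band-width-estimate}; once it is in place, the collar width bound is a direct consequence of the tangent blow-up of the Riccati solution.
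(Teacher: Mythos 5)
Your plan---doubling $M$ along $\partial M$ and running a band-width Callias argument across the seam---is a genuinely different route from the paper, which works directly on $M$ with the self-adjoint boundary condition $s\,c(\nu^{\flat})\sigma u=u$, invokes the Cecchini--Zeidler boundary value theory, and uses the Gromov--Lawson pair supplied by Lemma~\ref{lem:technical-lemma} on $M$ itself (with potentials $f_j$ blowing up at $\partial M$, not truncated). In principle the doubling approach could reproduce the width bound, but as written your proposal has two substantial gaps that are not merely ``bookkeeping.''

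The first gap is the sign of the $f\,\mathrm{tr}_{g_M}k$ term. The Callias cross-term in the square of $\mathcal{B}_f=\widetilde{\mathcal{D}}+f\sigma$ is $c({\rm d}f)\sigma+f\,\mathrm{tr}_{g_M}k$ (see Proposition~\ref{pro:spectral-estimate}), so the crucial lower bound contains $f^2-|{\rm d}f|+f\,\mathrm{tr}_{g_M}k$ and one needs $f\,\mathrm{tr}_{g_M}k\geq 0$. With $f$ antisymmetric across the seam --- positive on the $M$-side, negative on the $M'$-side --- your first-case extension ``preserving $\mathrm{tr}_{g_M}k$'' gives $\mathrm{tr}_{g_M}k\geq 0$ on the $M'$-copy of the collar where $f<0$, so $f\,\mathrm{tr}_{g_M}k\leq 0$ there. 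Your second-case ``sign-reversing reflection'' gives $\mathrm{tr}_{g_M}k\leq 0$ on $M$ (where $f>0$) and $\mathrm{tr}_{g_M}k\geq 0$ on $M'$ (where $f<0$), so $f\,\mathrm{tr}_{g_M}k\leq 0$ on \emph{both} halves. In neither case does your chosen reflection produce the ``favorable sign'' you assert; at best the roles of the two extensions should be swapped, and even then only the first case is repaired.

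The second gap is not a sign-swap fix: when $\mathrm{tr}_{g_M}k\leq 0$ on all of $M$, the sign of $f\,\mathrm{tr}_{g_M}k$ is unavoidably bad wherever $f>0$, which is precisely where $f$ must grow for the width argument to close. The paper deals with this by \emph{deforming the second fundamental form}, setting $\widetilde{k}:=k-\widetilde{f}_j\,g_M$, which shifts the unfavourable cross-term into a redefined energy--momentum density (equations~\eqref{eq:deformed-energy-momentum}--\eqref{eq:changedspectral-estimate}), together with a differently normalized Riccati-type potential~\eqref{eq:minustrace-potential}. Your proposal contains no analogue of this $\widetilde{k}$-deformation, and without it the lower bound does not close in the second case. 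You should also flag the regularity of the doubled data: the metric and the reflected/cut-off $k$ on $\widehat{M}$ are only Lipschitz across $\partial M$ unless $\partial M$ is totally geodesic, so a smoothing step or a low-regularity Weitzenb\"ock estimate would be required before the closed-manifold Callias index theorem can be invoked, a complication the paper avoids entirely by staying on $M$ with boundary conditions.
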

\begin{remark}
Theorem \ref{thm:IDS-width-inequality} with $k\equiv 0, \mathcal{Q}=\frac{1}{2}m(m-1)$ gives an affirmative answer to Conjecture \ref{conj:geodesic-collar-neighborhood}. See also \cite[Theorem~1.7]{CZ22} for a spinorial proof of Conjecture \ref{conj:geodesic-collar-neighborhood} using the information from the mean curvature of the boundary.
\end{remark}

Throughout this paper, all manifolds are assumed to be smooth, oriented and connected.  

The remainder of the paper is organized as follows. In Section \ref{sec:preliminaries}, we provide some backgrounds and facts of technical preparations. In Section \ref{sec:long-neck-principle}, we use the Callias operator approach to supply the proof of Theorem \ref{thm:IDS-long-neck-principle}. In Section \ref{sec:band-width-estimates}, we again employ the Callias operators to prove Theorem \ref{thm:General-IDS-band-width-estimate} and its applications: Theorem \ref{thm:IDS-band-width-estimate} and Theorem \ref{thm:IDS-KO-band-width-estimate}. In Section \ref{sec:width-inequality-of-geodesic-collar-neighborhood}, we firstly apply the Callias operator method to give the proof of Theorem \ref{thm:IDS-width-inequality} similar to that of Theorem \ref{thm:IDS-long-neck-principle}.


\section{Preliminaries}\label{sec:preliminaries}

\vspace{9pt}

In this section, we review some necessary background material and prerequisite results for later use.

\begin{definition}\label{defn:IDS}
An \textit{initial data set} $(M, g_M, k)$ is a Riemannian manifold $(M,g_M)$ with a symmetric $(0,2)$-tensor $k$. Set
\begin{equation}\label{eq:energy-momentum}
\mu=\frac{1}{2}({\rm scal}_M-|k|_{g_M}^2+({\rm tr}_{g_M} k)^2),\quad J={\rm div}_{g_M} k-{\rm d}({\rm tr}_{g_M} k),
\end{equation}
where ${\rm scal}_M$ is the scalar curvature of $(M, g_M)$. Here $\mu$ and $J$ denote the \textit{energy density} and the \textit{momentum density} respectively. An initial data set $(M,g_{M},k)$ is said to satisfy the \textit{dominant energy condition} (DEC) if 
\[
\mu \geq |J|_{g_M}.
\]
\end{definition}
\begin{remark}
An initial data set can be considered as a spacelike hypersurface in a Lorentzian spacetime $(\overline{M},\overline{g})$, where $g_M$ is the induced Riemannian metric on $M$ and $k$ is its second fundamental form $k(X,Y)=\langle \nabla_X^{\overline{M}} e_0, Y \rangle$ with respect to the unit timelike normal vector field $e_0$, where $X,Y\in TM$.
\end{remark}

\begin{remark}
When $k\equiv 0$, the dominant energy condition reduces to the non-negativity of scalar curvature ${\rm scal}_M\geq 0$. When $k=g$, the dominant energy condition reduces to the condition ${\rm scal}_M \geq -m(m-1)$.
\end{remark}

If $\partial M\neq \emptyset$, let $\nu$ be the outward unit normal of $\partial M$ in $M$, then the mean curvature of $\partial M$ with respect to $\nu$ is defined as
\begin{equation}\label{eq:defn-mean-curvature}
    H_{\partial M}:= \frac{1}{m-1} {\rm tr}_{\partial M}(\nabla \nu) = \frac{1}{m-1} \sum_{i=1}^{m-1}\langle e_i, \nabla_{e_i}\nu \rangle.
\end{equation}
where $\nabla \nu$ is the shape operator and $e_1,\cdots,e_{m-1}$ is a local orthonormal frame on $\partial M$.

The reader should note that our notation differs from that commonly used in the literature, in which the mean curvature is the trace of the shape operator $\nabla \nu$ of $\partial M$. We hope that this will not cause the reader any confusion.

\begin{definition}
Given an $m$-dimensional initial data set $(M, g_M, k)$ with boundary $\partial M$, the \textit{outward (inward) null expansion} of $\partial M$ is defined by
\begin{equation}\label{eq:null-expansion}
    \theta_{\partial M}^{\pm}=(m-1) H_{\partial M} \pm {\rm tr}_{\partial M} k.
\end{equation}
\end{definition}

In this section, we describe the Callias operator techniques employed in our main results. 
The method we use is similar to that in \cite{CZ22,CZ21,Liu23}.
We start by considering three exemplary geometric examples which are relevant to Callias operators. 

We firstly introduce the following example of spacetime Gromov-Lawson pairs, an augmentation of a Gromov-Lawson pair \cite[Example~2.5]{CZ22}.
\begin{example}[Spacetime Gromov-Lawson pairs]
Let $(M,g_M,k)$ be a spin initial data set, possibly noncompact and with compact boundary, $\dim M=m$ even. Following \cite{CLZ23}, let $T^*M\oplus \mathbb{R}^2$ be the vector bundle with a metric of signature $(m,2)$, where $\mathbb{R}^2 =\langle y_0,y_1\rangle$ is equipped with the negative definite metric $-{\rm d}y_0^2 - {\rm d}y_1^2$. Let $\mathcal{S}_M \to M$ be the spinor bundle associated to $T^*M\oplus \mathbb{R}^2$ with respect to some representation ${\rm Cl}_{m,2}\hookrightarrow {\rm End}(\Sigma)$. Let $\epsilon_i=c(y_i):\mathcal{S}_M \to \mathcal{S}_M$ be the Clifford multiplication of $y_i$.
We say that a pair of Hermitian bundles $(\mathcal{E},\mathcal{F}) \to M$ with metric connections is a \textit{spacetime Gromov-Lawson pair} on $M$ with \textit{support} $K$ if there exists a compact subset $K$ of the interior $M^{\circ}$ and a parallel unitary bundle isomorphism $\daleth:\left.\mathcal{E}\right|_{M\setminus K} \to \left.\mathcal{F} \right|_{M\setminus K}$ which extends to a smooth bundle map on a neighborhood of $\overline{M\setminus K}$. Then in analogy with Cecchini and Zeidler \cite{CZ22}, we consider the $\mathbb{Z}_2$-graded Dirac bundle
\begin{equation}
S=\mathcal{S}_M \widehat{\otimes} (\mathcal{E} \oplus (\mathcal{F})^{\rm op}),
\end{equation}
where the grading $S=S^+\oplus S^-$ is given by
\begin{equation}
S^+:=(\mathcal{S}_M^+ \otimes \mathcal{E}) \oplus (\mathcal{S}_M^- \otimes \mathcal{F})\quad \text{and} \quad S^-:=(\mathcal{S}_M^+ \otimes \mathcal{F}) \oplus (\mathcal{S}_M^- \otimes \mathcal{E}).
\end{equation}
This bundle carries two involutions $\sigma_0,\sigma_1$ respectively given by
\begin{equation}
\sigma_0:=\epsilon_0 \widehat{\otimes} \begin{pmatrix}
0& \daleth^* \\
\daleth & 0
\end{pmatrix}
: \left. S\right|_{M\setminus K}\to \left. S\right|_{M\setminus K},
\end{equation}
and
\begin{equation}
\sigma_1:=\epsilon_1 \widehat{\otimes} \begin{pmatrix}
0& -\sqrt{-1}\daleth^* \\
\sqrt{-1} \daleth & 0
\end{pmatrix}
: \left. S\right|_{M\setminus K}\to \left. S\right|_{M\setminus K}.
\end{equation}
Let $\slashed{D}_{\mathcal{E}},\slashed{D}_{\mathcal{F}}$ be the twisted Dirac operators on $M$. Note that
\begin{equation}
\slashed{D}_{\mathcal{E}}=
\begin{pmatrix}
0& \slashed{D}_{\mathcal{E}}^{-} \\
\slashed{D}_{\mathcal{E}}^{+} & 0
\end{pmatrix}
\quad \text{and} \quad
\slashed{D}_{\mathcal{F}}=
\begin{pmatrix}
0& \slashed{D}_{\mathcal{F}}^{-} \\
\slashed{D}_{\mathcal{F}}^{+} & 0
\end{pmatrix},
\end{equation}
where $(\slashed{D}_{\mathcal{E}}^{+})^*=\slashed{D}_{\mathcal{E}}^{-}$, $(\slashed{D}_{\mathcal{F}}^{+})^*=\slashed{D}_{\mathcal{F}}^{-}$. Let $\mathcal{D}^{\pm}: C^{\infty}(M,S^{\pm}) \to C^{\infty}(M,S^{\mp})$ are the operators given by
\begin{equation}
\mathcal{D}^{+}=
\begin{pmatrix}
0& \slashed{D}_{\mathcal{F}}^{-} \\
\slashed{D}_{\mathcal{E}}^{+} & 0
\end{pmatrix}
\quad \text{and} \quad
\mathcal{D}^{-}=
\begin{pmatrix}
0& \slashed{D}_{\mathcal{E}}^{-} \\
\slashed{D}_{\mathcal{F}}^{+} & 0
\end{pmatrix}.
\end{equation}
Then Dirac operator $\mathcal{D}: C^{\infty}(M,S) \to C^{\infty}(M,S)$ on $S$ is defined as
\begin{equation}
\mathcal{D}=
\begin{pmatrix}
0& \mathcal{D}^{-} \\
\mathcal{D}^{+} & 0
\end{pmatrix}.
\end{equation}
We have the following Bochner-Lichnerowicz-Schr\"{o}dinger-Weitzenb\"{o}ck (B-L-S-W) formula 
\begin{equation}\label{eq:BLSW}
 \mathcal{D}^2=\nabla^*\nabla+\mathscr{R}
\end{equation}
and the curvature term $\mathscr{R}$ in \eqref{eq:BLSW} is given by 
\begin{equation}\label{eq:zeroterm-GL}
\mathscr{R}=\frac{1}{4} {\rm scal}_M + \mathscr{R}^{\mathcal{E}\oplus\mathcal{F}},
\end{equation}
where $\mathscr{R}^{\mathcal{E}\oplus\mathcal{F}}=\sum_{i<j}c(e^i)c(e^j)({\rm id}_{\mathcal{S}_M}\otimes R_{e_i,e_j}^{\nabla^{\mathcal{E}\oplus\mathcal{F}}})$ is an even endomorphism of the bundle $S$ which depends linearly on the curvature of the connection on $\mathcal{E}\oplus\mathcal{F}$.
\end{example}
Next, we turn to spacetime relative Gromov-Lawson pairs over an initial data set allowed to be non-spin.
\begin{example}[Spacetime relative Gromov-Lawson pair]
Let $(M,g_M,k)$ be an intial data set, possibly noncompact and with compact boundary, $\dim M=m$ even. Following \cite{CLZ23} again, let $T^*M\oplus \mathbb{R}^2$ be the vector bundle with a metric of signature $(m,2)$, where $\mathbb{R}^2 =\langle y_0,y_1\rangle$ is equipped with the negative definite metric $-{\rm d}y_0^2 - {\rm d}y_1^2$. 
Let $\mathcal{S}_M \to M$ be the local spinor bundle associated to $T^*M\oplus \mathbb{R}^2$ with respect to some representation ${\rm Cl}_{m,2}\hookrightarrow {\rm End}(\Sigma)$. Let $\epsilon_i=c(y_i^{\flat}):\mathcal{S}_M \to \mathcal{S}_M$ be the Clifford multiplication of $y_i$, where $\flat$ denotes the isomorphism from $TM$ to $T^*M$.
Let $N$ be a $n$-dimensional Riemannian manifold and $\Phi,\Psi$ be two smooth spin maps from $M$ to $N$. Let $\mathcal{S}_N \to N$ be the local spinor bundle.
Note that, although $\mathcal{S}_M$ and $\mathcal{S}_N$ are locally defined, $\mathcal{S}_M\widehat{\otimes} \Phi^*\mathcal{S}_N$ and $\mathcal{S}_M\widehat{\otimes} \Psi^*\mathcal{S}_N$ are globally defined since $\Phi$ and $\Psi$ are spin maps.
We say that a local pair $(\Phi^*\mathcal{S}_N,\Psi^*\mathcal{S}_N)$ is a \textit{spacetime relative Gromov-Lawson pair} over $M$ with \textit{support} $K$, relative to the background manifold $N$, cf. \cite{Liu23}, if there exists a compact subset $K$ of the interior $M^{\circ}$ and a local parallel unitary bundle isomorphism $\daleth:\Phi^*\mathcal{S}_N \to \Psi^*\mathcal{S}_N$ outside $K$ which extends to a local smooth bundle map over a neighborhood of $\overline{M\setminus K}$. Then we may consider the $\mathbb{Z}_2$-graded Dirac bundle
\begin{equation}
S:=\mathcal{S}_M \widehat{\otimes} (\Phi^*\mathcal{S}_N \oplus (\Psi^*\mathcal{S}_N)^{\rm op}),
\end{equation}
where the grading $S=S^+\oplus S^-$ is given by
\begin{equation}
S^+:=(\mathcal{S}_M^+ \otimes \Phi^*\mathcal{S}_N) \oplus (\mathcal{S}_M^- \otimes \Psi^*\mathcal{S}_N)\quad \text{and} \quad S^-:=(\mathcal{S}_M^+ \otimes \Psi^*\mathcal{S}_N) \oplus (\mathcal{S}_M^- \otimes \Phi^*\mathcal{S}_N).
\end{equation}
This bundle carries two involutions $\sigma_0,\sigma_1$ respectively given by
\begin{equation}
\sigma_0:=\epsilon_0 \widehat{\otimes} \begin{pmatrix}
0& \daleth^* \\
\daleth & 0
\end{pmatrix}
: \left. S\right|_{M\setminus K}\to \left. S\right|_{M\setminus K},
\end{equation}
and
\begin{equation}
\sigma_1:=\epsilon_1 \widehat{\otimes} \begin{pmatrix}
0& -\sqrt{-1}\daleth^* \\
\sqrt{-1} \daleth & 0
\end{pmatrix}
: \left. S\right|_{M\setminus K}\to \left. S\right|_{M\setminus K}.
\end{equation} 
Let $\slashed{D}_{\Phi^*\mathcal{S}_N},\slashed{D}_{\Psi^*\mathcal{S}_N}$ be the twisted Dirac operators on $M$. Note that
\begin{equation}
\slashed{D}_{\Phi^*\mathcal{S}_N}=
\begin{pmatrix}
0& \slashed{D}_{\Phi^*\mathcal{S}_N}^{-} \\
\slashed{D}_{\Phi^*\mathcal{S}_N}^{+} & 0
\end{pmatrix}
\quad \text{and} \quad
\slashed{D}_{\Psi^*\mathcal{S}_N}=
\begin{pmatrix}
0& \slashed{D}_{\Psi^*\mathcal{S}_N}^{-} \\
\slashed{D}_{\Psi^*\mathcal{S}_N}^{+} & 0
\end{pmatrix},
\end{equation}
where $(\slashed{D}_{\Phi^*\mathcal{S}_N}^{+})^*=\slashed{D}_{\Phi^*\mathcal{S}_N}^{-}$, $(\slashed{D}_{\Psi^*\mathcal{S}_N}^{+})^*=\slashed{D}_{\Psi^*\mathcal{S}_N}^{-}$. Let $\mathcal{D}^{\pm}: C^{\infty}(M,S^{\pm}) \to C^{\infty}(M,S^{\mp})$ are the operators is defined as
\begin{equation}
\mathcal{D}^{+}:=
\begin{pmatrix}
0& \slashed{D}_{\Psi^*\mathcal{S}_N}^{-} \\
\slashed{D}_{\Phi^*\mathcal{S}_N}^{+} & 0
\end{pmatrix}
\quad \text{and} \quad
\mathcal{D}^{-}:=
\begin{pmatrix}
0& \slashed{D}_{\Phi^*\mathcal{S}_N}^{-} \\
\slashed{D}_{\Psi^*\mathcal{S}_N}^{+} & 0
\end{pmatrix}.
\end{equation}
Then Dirac operator $\mathcal{D}: C^{\infty}(M,S) \to C^{\infty}(M,S)$ on $S$ is given by
\begin{equation}
\mathcal{D}=
\begin{pmatrix}
0& \mathcal{D}^{-} \\
\mathcal{D}^{+} & 0
\end{pmatrix}.
\end{equation}
The curvature term in the B-L-S-W formula is of the form
\begin{equation}\label{eq:zeroterm-relative-GL}
\mathscr{R}=\frac{1}{4} {\rm scal}_M + \mathscr{R}^{\Phi^*\mathcal{S}_N \oplus \Psi^*\mathcal{S}_N},
\end{equation}
where $\mathscr{R}^{\Phi^*\mathcal{S}_N \oplus \Psi^*\mathcal{S}_N}=\sum_{i<j}c(e^i)c(e^j)({\rm id}_{\mathcal{S}_M}\otimes R_{e_i,e_j}^{\nabla^{\Phi^*\mathcal{S}_N \oplus \Psi^*\mathcal{S}_N}})$.
\end{example} 

Now, fix a complete initial data set $(M,g_M,k)$ with compact boundary $\partial M$ and $S\to M$ a Dirac bundle with support $K$ and with the induced metric connection $\nabla$ and involutions $\sigma_0,\sigma_1$ from any of the above examples, then we define a new connection on $S$ by
\begin{equation}
\widetilde{\nabla}_{e_i}:= \nabla_{e_i}-\frac{1}{2}k_{ij}c(e^j)\sigma_0,
\end{equation}
then the \textit{Dirac-Witten operator} associated to $\widetilde{\nabla}$ is defined as
\begin{equation}
\widetilde{\mathcal{D}}:= c(e^i)\widetilde{\nabla}_{e_i}=\mathcal{D}+\frac{1}{2}({\rm tr}_{g_M}k)\sigma_0.
\end{equation}
Here we have used the Einstein summation convention. Furthermore, for a Lipschitz function $f$ on $M$, we define a modified connection on $S$ by
\begin{equation}
    \widetilde{\nabla}_{e_i}^{f} := \nabla_{e_i} - \frac{1}{2} k_{ij} c(e^j)\sigma_0 - \frac{f}{m}c(e^i)\sigma_1,
\end{equation}
then we consider the associated \textit{Dirac-Witten operator with a Callias potential} (or \textit{Callias operator} for short) that is defined in \cite{CLZ23} by 
\begin{equation}
     \widetilde{\mathcal{D}}^{f} := c(e^i)\widetilde{\nabla}_{e_i}^{f}=\mathcal{D}+\frac{1}{2}({\rm tr}_{g_M}k)\sigma_0 + f\sigma_1 = \widetilde{\mathcal{D}} + f\sigma_1.
\end{equation}
Using the Green-Stokes formula and the B-L-S-W formula, a direct computation then yields the following basic spectral estimate. For any smooth section $u\in C^{\infty}(M,S)$ of $S$ over $M$, we have
\begin{equation}\label{eq:basic-spectral-estimate}
\begin{aligned}
\int_M |\widetilde{\mathcal{D}}^{f} u|^2 dV
= & \int_M |\widetilde{\nabla}^f u|^2 dV
+ \int_M  \frac{1}{4} (({\rm tr}_{g_M}k)^2-|k|_{g_M}^2)|u|^2 dV
+\int_M \langle \mathscr{R} u, u\rangle dV \\
& - \frac{1}{2} \int_M \langle c({\rm div}_{g_M} k - {\rm d}({\rm tr}_{g_M}k))\sigma_0 u, u \rangle dV \\
& +\frac{m-1}{m} \int_M \langle (f^2 + c({\rm d}f)\sigma_1)u, u\rangle dV \\
& - \int_{\partial M} \langle (c(\nu^{\flat})\widetilde{\mathcal{D}}+\widetilde{\nabla}_{\nu}+\frac{m-1}{m}f c(\nu^{\flat})\sigma_1) u, u\rangle dA,
\end{aligned}
\end{equation}
where $\nu$ is the outward unit normal of $\partial M$.
Since $S$ has a $\mathbb{Z}_2$-graded structure, $\widetilde{\mathcal{D}}^{f}$ can be decomposed as $\widetilde{\mathcal{D}}^{f}=\widetilde{\mathcal{D}}^{f,+}\oplus\widetilde{\mathcal{D}}^{f,-}$ where $\widetilde{\mathcal{D}}^{f,\pm}: C^\infty(M,S^{\pm})\to C^\infty(M,S^{\mp})$. Let $s:\partial M\to \{\pm 1\}$ be the choice of signs, $\flat$ be the isomorphism from $TM$ to $T^*M$. Together with $\sigma_1$ this defines the following \textit{boundary chirality operator}
\begin{equation}
 \chi:= sc(\nu^{\flat})\sigma_1 : \left. S\right|_{\partial M} \to \left. S\right|_{\partial M}.
\end{equation}
We now consider an elliptic boundary value problem 
\begin{equation}\label{eq:boundary-value-problem}
\begin{cases}
\widetilde{\mathcal{D}}^{f} u=0\quad & \text{in}\ M, \\
\chi \left.u\right|_{\partial M}= \left.u\right|_{\partial M} \quad & \text{on}\ \partial M.
\end{cases}
\end{equation}
Let $\widetilde{\mathcal{D}}^{f,s}$ denote the operator $\widetilde{\mathcal{D}}^{f}$ on the domain
\begin{equation}\label{eq:domain}
    C_{\sigma_1,s}^\infty(M,S):=\left\{u\in C^\infty(M,S) \Big| \, \left. \chi u\right|_{\partial M}=\left.u\right|_{\partial M}\right\}.
\end{equation}
It follows from \cite[Theorem~3.4]{CZ22} that $\widetilde{\mathcal{D}}^{f,s}$ is self-adjoint and Fredholm.
Since $\chi$ is even with respect to the grading on $S$, the operator $\widetilde{\mathcal{D}}^{f,s}$ can also be decomposed as $\widetilde{\mathcal{D}}^{f,s}=\widetilde{\mathcal{D}}^{f,s,+}\oplus \widetilde{\mathcal{D}}^{f,s,-}$ where $\widetilde{\mathcal{D}}^{f,s,\pm}$ are adjoint to each other. Therefore, the \textit{index} of $\widetilde{\mathcal{D}}^{f,s}$ is defined as
\begin{equation}\label{eq:defn-of-index}
{\rm ind}(\widetilde{\mathcal{D}}^{f,s}):=\dim(\ker(\widetilde{\mathcal{D}}^{f,s,+}))-\dim(\ker(\widetilde{\mathcal{D}}^{f,s,-})).
\end{equation}
By \cite[Lemma~3.8]{CZ22}, ${\rm ind}(\widetilde{\mathcal{D}}^{f,s})$ does not depend on the choice of Callias potentials.

We shall calculate the boundary term $c(\nu^{\flat})\widetilde{\mathcal{D}}+\widetilde{\nabla}_{\nu}$ under the boundary condition in \eqref{eq:boundary-value-problem}. In order to achieve this, we consider the following boundary Dirac operator $\mathcal{A}$ on the boundary Dirac bundle $\left. S\right|_{\partial M}$
\begin{equation}
   \mathcal{A}:= \sum_{i=1}^{m-1} c^{\partial}(e^i)\nabla_{e_i}^{\partial},
\end{equation}
where $c^{\partial}(e^i)=-c(e^i)c(\nu^{\flat})$ and $\nabla_{e_i}^{\partial} =\nabla_{e_i} - \frac{1}{2} c^{\partial} (\nabla_{e_i}\nu^{\flat})$ is the Clifford multiplication and connection respectively. Note that
\begin{equation}\label{eq:boundary-Dirac-operator-identity}
\begin{aligned}
\mathcal{A} =& \frac{1}{2}(m-1) H_{\partial M} + c(\nu^{\flat}) \mathcal{D} + \nabla_{\nu}\\
=& \frac{1}{2}(m-1)H_{\partial M} + c(\nu^{\flat})(\widetilde{\mathcal{D}}-\frac{1}{2}({\rm tr}_{g_M}k) \sigma_0 ) + \widetilde{\nabla}_{\nu} +\frac{1}{2} k_{j\nu} c(e^j) \sigma_0 \\
=& \frac{1}{2}(m-1) H_{\partial M}  - \frac{1}{2}({\rm tr}_{g_M}k) c(\nu^{\flat}) \sigma_0 + \frac{1}{2} k_{\nu\nu}c(\nu^{\flat})\sigma_0 + \frac{1}{2} k_{a\nu} c(e^a)\sigma_0 \\
&+ c(\nu^{\flat})\widetilde{\mathcal{D}}+\widetilde{\nabla}_{\nu} \\
=& \frac{1}{2}(m-1) H_{\partial M} - \frac{1}{2}({\rm tr}_{\partial M}k) c(\nu^{\flat}) \sigma_0 + \frac{1}{2} k_{a\nu} c(e^a)\sigma_0 + c(\nu^{\flat})\widetilde{\mathcal{D}}+\widetilde{\nabla}_{\nu}.
\end{aligned}
\end{equation}
Note that, the boundary condition in \eqref{eq:boundary-value-problem} implies that
\begin{equation}\label{eq:boundary-condition-result}
\langle \mathcal{A}u, u\rangle=\langle c(\nu^{\flat})\sigma_0 u, u\rangle=0.
\end{equation}
Here we have used the fact that $\chi^2={\rm Id}, \chi\mathcal{A}=-\mathcal{A}\chi$, and $\chi(c(\nu^{\flat})\sigma_0)=-(c(\nu^{\flat})\sigma_0)\chi$. We also note that
\begin{equation}\label{eq:k-top-boundary-term}
\begin{aligned}
\langle k_{a\nu} c(e^a) \sigma_0 u, u \rangle  & = k_{\nu\nu}\langle c(\nu^{\flat})\sigma_0 u, u\rangle + \langle \sum_{a=1}^{m-1} k_{a\nu}c(e^a)\sigma_0 u, u \rangle \\
&\geq - (\sum_{a=1}^{m-1} k_{a\nu}^2)^{\frac{1}{2}} |u|^2 = -|k(\nu,\cdot)^{\top}|\ |u|^2.
\end{aligned}
\end{equation}
Let $u\in C^{\infty}(M,S;\chi)$ be a smooth section of $S$ over $M$ satisfying the boundary condition in \eqref{eq:boundary-value-problem}.
Therefore, combining \eqref{eq:boundary-Dirac-operator-identity}, \eqref{eq:boundary-condition-result}, and \eqref{eq:k-top-boundary-term}, we have 
\begin{equation}\label{eq:key-part-of-boundary-term}
\begin{aligned}
&-\int_{\partial M} \langle (c(\nu^{\flat})\widetilde{\mathcal{D}}+\widetilde{\nabla}_{\nu})u, u\rangle dA\\
= & \frac{1}{2} \int_{\partial M} ((m-1)H_{\partial M} - |k(\nu,\cdot)^{\top}|) |u|^2 dA.
\end{aligned}
\end{equation}

To summarize, from \eqref{eq:basic-spectral-estimate}, \eqref{eq:key-part-of-boundary-term}, and the inequality $\langle c({\rm d}f)\sigma_1 u, u\rangle\leq |{\rm d}f||u|^2$, we arrive at the following proposition.
\begin{proposition}
With the notation above, we have
\begin{equation}\label{spectralestimate}
\begin{aligned}
\int_M |\widetilde{\mathcal{D}}^{f,s} u|^2 dV \geq & \int_M |\widetilde{\nabla}^f u|^2 dV + \int_M  \frac{1}{4} (({\rm tr}_{g_M}k)^2-|k|_{g_M}^2)|u|^2 dV
+\int_M \langle \mathscr{R} u, u\rangle dV \\
& - \frac{1}{2} \int_M \langle c({\rm div}_{g_M} k - {\rm d}({\rm tr}_{g_M}k))\sigma_0 u, u \rangle dV \\
& +\frac{m-1}{m} \int_M (f^2 - |{\rm d}f| )|u|^2 dV\\
& + \int_{\partial M} \left(\frac{1}{2}(m-1)H_{\partial M} -\frac{1}{2} |k(\nu,\cdot)^{\top}| - \frac{m-1}{m} fs \right)|u|^2 dA
\end{aligned}
\end{equation}
for any $u\in C^{\infty}(M,S;\chi)$. 
\end{proposition}
Finally, in the remainder of this section, we review the example of odd bands in \cite{CZ22} for the sake of completeness.
\begin{example}[Odd bands]\label{example:odd-bands}
Let $(M,g_M)$ is an odd-dimensional Riemannian spin band.
Let $\mathcal{S}_M\to M$ be the spinor bundle with spin connection and Clifford multiplication $c_{\mathcal{S}}$. Let $\mathcal{E}\to M$ be a Hermitian bundle with a metric connection. Then $S:=(\mathcal{S}_M \otimes \mathcal{E}) \oplus (\mathcal{S}_M\otimes \mathcal{E})$ becomes a $\mathbb{Z}_2$-graded Dirac bundle with the Clifford multiplication
\begin{equation}
c(\xi):=\begin{pmatrix}
0& c_{\mathcal{S}}(\xi) \otimes {\rm id}_{\mathcal{E}} \\
c_{\mathcal{S}}(\xi) \otimes {\rm id}_{\mathcal{E}} & 0
\end{pmatrix}
\end{equation}
and involution
\begin{equation}
\sigma_1:=\begin{pmatrix}
0& -\sqrt{-1} \\
\sqrt{-1} & 0
\end{pmatrix}.
\end{equation}
Let $\slashed{D}_{\mathcal{E}}$ be the twisted Dirac operator on $M$, then Dirac operator on $S$ is defined as 
\begin{equation}
\mathcal{D}=\begin{pmatrix}
0& \slashed{D}_{\mathcal{E}} \\
\slashed{D}_{\mathcal{E}} & 0
\end{pmatrix}.
\end{equation}
The curvature term in the B-L-S-W formula is of the form
\begin{equation}\label{eq:zeroterm-odd-bands}
\mathscr{R}=\frac{1}{4} {\rm scal}_M + \mathscr{R}^{\mathcal{E}},
\end{equation}
where $\mathscr{R}^{\mathcal{E}}=\sum_{i<j}c(e^i)c(e^j)({\rm id}_{\mathcal{S}_M\oplus \mathcal{S}_M} \otimes R_{e_i,e_j}^{\nabla^{\mathcal{E}}})$.
\end{example}
Let $(M,g_M,k)$ be an odd-dimensional compact initial data set with boundary. Furthermore, assume that $(M,g_M)$ is a spin band in Example \ref{example:odd-bands}. In this case, for any Lipschitz potential function $f$ we consider the Callias operator on $S$ given by
\begin{equation}
\widetilde{\mathcal{D}}^{f}:=\mathcal{D}+\frac{1}{2}({\rm tr}_{g_M}k)\sigma_1+f\sigma_1.
\end{equation}

As before, a routine calculation procedure that we omit here allows us to characterize a spectral estimate of the Callias operator.
\begin{proposition}\label{pro:odd-band-spectral-estimates}
Let $(M,g_M)$ be an odd band, $s: \partial M\to \{\pm 1\}$ be the choice of signs, and $\widetilde{\mathcal{D}}^{f,s}$ be a Callias operator. Then for every smooth section $u\in C^\infty(M,S;\chi)$,
\begin{equation}\label{eq:odd-bands-spectral-estimate}
\begin{aligned}
 \int_M |\widetilde{\mathcal{D}}^{f,s} u|^2 dV \geq &  \int_M \left( | \widetilde{\nabla}^f u |^2  +  \frac{1}{2} (\mu-|J|_{g_M})|u|^2 + \langle \mathscr{R}^{\mathcal{E}} u, u \rangle \right) dV\\
& + \frac{m-1}{m} \int_M  (f^2-|{\rm d} f|+f ({\rm tr}_{g_M}k)) |u|^2 dV \\
& +  \int_{\partial M}  \left( \frac{1}{2}(m-1)H_{\partial M}  + \left(\frac{1}{2}({\rm tr}_{\partial M} k) - \frac{m-1}{m} f \right) s \right) |u|^2 dA.
\end{aligned}
\end{equation}
\end{proposition}


\section{A long neck principle under the relative dominant energy condition}\label{sec:long-neck-principle}
In this section, we aim to prove a more general long neck principle in the case where the map is from an initial data set to a Riemannian manifold with non-negative curvature operator. 
\begin{definition}[\cite{GS02}]\label{defn:curvature-operator}
The \textit{curvature operator} $\mathcal{R}_N:\Lambda^2TN\to \Lambda^2TN$ is a self-adjoint operator induced by the Riemannian curvature tensor $R$ of Riemannian manifold $(N,g_N)$, such that
\begin{equation}
g_N(\mathcal{R}_N(e_i\wedge e_j),e_k\wedge e_l)=-g_N(R_{e_i,e_j}e_k,e_l).
\end{equation}
where $e_1,\cdots,e_n$ is a locally orthonormal frame of $TM$.
\end{definition}
Since $\mathcal{R}_N$ is self-adjoint it makes sense to talk about the positivity of $\mathcal{R}_N$. When $\mathcal{R}_N\geq 0$, $g_N$ has non-negative sectional curvature. 
\begin{definition}[\cite{GL83,Lis10}]
Let $(M,g_M)$ and $(N,g_N)$ be two Riemannian manifolds and $\Phi: M\to N$ be a smooth map. Then $\Phi$ is said to be \textit{area non-increasing}, if for all $p\in M$, we have $\|{\rm d}\Phi(v)\wedge {\rm d}\Phi(w)\|_{g_N}\leq \|v\wedge w\|_{g_M}$ for any $v,w\in T_pM$. The \textit{area contraction constant} at $p\in M$ is defined to be the operator norm of the induced map $\Lambda^2{\rm d}\Phi:\Lambda^2T_pM\to \Lambda^2T_{\Phi(p)}N$ on $2$-vectors, i.e.,
\begin{equation}
    \|\Lambda^2{\rm d}\Phi\|:M\to[0,\infty),\quad p\mapsto \max_{0\not=\eta\in \Lambda^2T_pM}\frac{\|\Lambda^2{\rm d}\Phi(\eta)\|_{g_N}}{\|\eta\|_{g_M}}.
\end{equation}
\end{definition}
Let $\|\Lambda^2{\rm d}\Phi\|_{\infty}=\sup\limits_{p\in M}\|\Lambda^2{\rm d}\Phi\|(p)$. Clearly, $\Phi$ is area non-increasing if and only if $\|\Lambda^2{\rm d}\Phi\|_{\infty}\leq 1$.

\begin{definition}[\cite{GL83}]
Let $(M,g_M)$ and $(N,g_N)$ be two compact Riemannian manifolds such that $\dim M-\dim N=4t$ where $t\in\mathbb{Z}_{\geq 0}$. Let $\omega$ be a volume form on $N$ with non-zero integral and let $\widehat{A}(M)$ be the total $\widehat{A}$-class of $M$. Then the \textit{$\widehat{A}$-degree} of the smooth map $\Phi: M\to N$ is given by
\begin{equation}
{\rm deg}_{\widehat{A}}(\Phi)=\dfrac{\int_M\Phi^*\omega\wedge\widehat{A}(M)}{\int_N\omega}.
\end{equation}
\end{definition}
If $\dim M=\dim N$, then ${\rm deg}_{\widehat{A}}(\Phi)={\rm deg}(\Phi)$. If $M$ is a compact manifold with non-empty boundary, $N$ is closed and $\Phi$ is locally constant near the boundary $\partial M$, then $\Phi$ has a well-defined $\widehat{A}$-degree.

\begin{definition}[\cite{GL83,GS02}]
A smooth map $\Phi: M\to N$ between two oriented manifolds is a \textit{spin map} if it is compatible with the second Stiefel-Whitney classes, i.e., 
\begin{equation}\label{eq:spin-map}
    w_2(TM)=\Phi^* w_2(TN).
\end{equation}
\end{definition}
Therefore, in case $N$ is spin, $\Phi$ is a spin if and only if $M$ is spin. 
For the presentation of our result of long neck problem for initial data sets, we make the following definition:
\begin{definition}\label{defn:relative-DEC}
For a spin map $\Phi: M\to N$  from an initial data set $(M,g_M, k)$ to a Riemannian manifold $(N,g_N)$, $(M, g_{M}, k)$ is said to satisfy a \textit{relative dominant energy condition}, if
\[
  \mu-|J|_{g_M} \geq \frac{1}{2}({\rm scal}_{N}\circ \Phi) \cdot \|\Lambda^2{\rm d}\Phi\|
\]
on $M$.
\end{definition}
Using the same argument as in \cite[Lemma~5.1]{CZ22} and also \cite[Lemma~3.1]{Liu23}, we thus obtain the following lemma for the spacetime relative Gromov-Lawson pair over initial data sets.
\begin{lemma}\label{lem:relative-GL-pair}
Let $(N, g_N)$ be a compact $n$-dimensional Riemannian manifold with non-negative curvature operator $\mathcal{R}_N\geq 0$ on $\Lambda^2TN$ and non-vanishing Euler characteristic, where $n\geq 2$ even. Let $(M,g_M, k)$ be a compact $m$-dimensional spin initial data set with non-empty boundary, where $m=n+4t$ for $t\in\mathbb{Z}_{\geq 0}$. Let $\Phi: M\to N$ be a smooth spin map which is locally constant near the boundary $\partial M$ and of non-zero $\widehat{A}$-degree. If the length of the neck ${\rm dist}_{g_M}({\rm supp}({\rm d}\Phi),\partial M)\geq \ell$ for some number $\ell$, then there exists a spacetime relative Gromov-Lawson pair $(\mathcal{\mathcal{E}},\mathcal{F})$ such that 
\begin{itemize}
\item[(i)] $\mathscr{R}_p^{\mathcal{E}\oplus \mathcal{F}}\geq -\frac{1}{4}({\rm scal}_{N}\circ \Phi)\cdot \|\Lambda^2{\rm d}\Phi\|(p)$ for all $p\in M$;
\item[(ii)] $(\mathcal{E},\mathcal{F})$ has support $K:=\{p\in M|{\rm dist}_{g_M}(p,\partial M) \geq \ell\}\supseteq {\rm supp}({\rm d}\Phi)$;
\item[(iii)] ${\rm indrel}(M;\mathcal{E},\mathcal{F})\neq 0$.
\end{itemize}
\end{lemma}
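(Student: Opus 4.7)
Since the conclusion of the lemma depends only on the Riemannian structure of $M$ and the map $\Phi$---the symmetric $(0,2)$-tensor $k$ does not appear in the statement---the proof can be carried out exactly as in \cite[Lemma~5.1]{CZ22} and its generalization \cite[Lemma~1.3]{Liu23}. I would follow that blueprint without essential modifications.

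The first step is to manufacture on $N$ a pair of Hermitian bundles with metric connections $E^{+},E^{-}\to N$ of equal rank. By \cite[Theorem~0.1]{GS02}, the hypotheses $\mathcal{R}_N\geq 0$ and $\chi(N)\neq 0$ together with $n$ even allow us to choose $E^{\pm}$ such that (a) the curvature endomorphism satisfies the pointwise bound $\mathscr{R}^{E^{+}\oplus E^{-}}\geq -\tfrac14\,\mathrm{scal}_N$ on $N$, which follows from the Bochner formula combined with the non-negativity of $\mathcal{R}_N$ acting on $\Lambda^2 TN$; and (b) the Chern character of the virtual bundle $E^{+}\ominus E^{-}$ has non-vanishing top-degree component, equal to the Euler form of $N$ up to a non-zero scalar multiple.

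Next I set $\mathcal{E}:=\Phi^{*}E^{+}$ and $\mathcal{F}:=\Phi^{*}E^{-}$. On $U:=M\setminus K$ the map $\Phi$ is locally constant by assumption, so the pulled-back connections on $\mathcal{E}|_U$ and $\mathcal{F}|_U$ are flat and the restrictions are canonically trivial on each component of $U$. Picking a unitary identification of fibers at one point of each component and extending by parallel transport produces a parallel unitary bundle isomorphism $\daleth:\mathcal{E}|_U\to\mathcal{F}|_U$ extending smoothly to a neighborhood of $\overline{U}$; this establishes (ii). For (i), the pullback identity $R^{\Phi^{*}E^{\pm}}_{X,Y}=R^{E^{\pm}}_{d\Phi(X),d\Phi(Y)}$ shows that the sum over an orthonormal frame of $T_pM$ that defines $\mathscr{R}^{\mathcal{E}\oplus\mathcal{F}}$ factors through $\Lambda^2 d\Phi$, so its operator norm is controlled by $\|\Lambda^2 d\Phi\|(p)$ times the corresponding norm on $N$, which yields the stated inequality.

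The main obstacle is (iii). I would compute the relative index via the Atiyah-Singer formula on the double $dM$:
\[
\mathrm{indrel}(M;\mathcal{E},\mathcal{F})=\mathrm{ind}\bigl(\slashed D_{dM,V(\mathcal{E},\mathcal{F})}\bigr)=\int_{dM}\widehat A(dM)\wedge\mathrm{ch}\bigl(V(\mathcal{E},\mathcal{F})\bigr).
\]
Because $V(\mathcal{E},\mathcal{F})$ coincides with $\mathcal{E}$ on the copy of $M$, with $\mathcal{F}$ on $M^{-}$, and is glued via the parallel isomorphism $\daleth$ across the collar, its Chern character is represented by the pullback $\Phi^{*}\mathrm{ch}(E^{+}\ominus E^{-})$ on $M$ and vanishes on $M^{-}$. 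Combined with property (b) above and the definition of $\mathrm{deg}_{\widehat A}(\Phi)$, the integral reduces to a non-zero scalar multiple of $\chi(N)\cdot\mathrm{deg}_{\widehat A}(\Phi)$, which is non-zero by hypothesis. The delicate point is the bookkeeping that identifies the relative Chern character on $dM$ with the pullback of $\mathrm{ch}(E^{+}\ominus E^{-})$ across the collar where $\daleth$ lives; once that is in hand the remaining arguments are purely formal.
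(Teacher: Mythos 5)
Your proposal correctly identifies the skeleton of the argument (pull back a pair of bundles from $N$, get the curvature estimate from the non-negativity of $\mathcal{R}_N$, and get non-vanishing of the relative index from $\chi(N)\neq 0$ and $\deg_{\widehat A}(\Phi)\neq 0$), and the observation that $k$ plays no role in this lemma is correct. However, several key steps are either misattributed or glossed over.

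First, $[\text{GS02, Theorem }0.1]$ does not supply bundles $E^{\pm}$ on $N$: it is a scalar-curvature comparison/rigidity theorem and is used in this paper only in passing from Theorem~\ref{thm:IDS-long-neck-principle} to Corollary~\ref{cor:IDS-long-neck-relative-to-symmetric-spaces}. The bundles implicit in the construction are the (locally defined, even-rank) half-spinor bundles $\mathcal{S}_N^{\pm}$ of $N$, or equivalently the full spinor bundle $\mathcal{S}_N$ in the relative Gromov--Lawson pair $(\Phi^{*}\mathcal{S}_N,\Psi^{*}\mathcal{S}_N)$ furnished by $[\text{Liu23, \S2}]$ with a second spin map $\Psi$. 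Describing them as ``Hermitian bundles manufactured on $N$ via GS02'' obscures this. Relatedly, the pointwise bound $\mathscr{R}^{\Phi^{*}\mathcal{S}_N}\geq -\tfrac14(\mathrm{scal}_N\circ\Phi)\|\Lambda^2 d\Phi\|$ is \emph{not} a formal consequence of the Bochner formula and pullback functoriality; it is the Goette--Semmelmann curvature estimate for pulled-back spinor bundles, which requires a genuine Clifford-algebra computation showing that $\mathcal{R}_N\geq 0$ on $\Lambda^2 TN$ controls the twisting endomorphism in the Lichnerowicz formula. Your sentence ``its operator norm is controlled by $\|\Lambda^2 d\Phi\|(p)$ times the corresponding norm on $N$'' is not what is needed---what is needed is a one-sided lower bound on the \emph{endomorphism}, not a bound on an operator norm, and obtaining it is where the work in \cite[Lemma~5.1]{CZ22} actually lies.

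Second, and more fundamentally, the hypothesis of the lemma is only that $\Phi$ is a \emph{spin map} ($w_2(TM)=\Phi^{*}w_2(TN)$), not that $N$ or $M$ is spin. In that generality $E^{\pm}=\mathcal{S}_N^{\pm}$ are only locally defined on $N$, so ``$\mathcal{E}:=\Phi^{*}E^{+}$'' is not a globally defined Hermitian bundle on $M$, and the trivialization argument you sketch over $U=M\setminus K$ breaks down at the very first step. This is precisely the issue the paper's notion of \emph{relative} Gromov--Lawson pair is designed to handle: one takes the pair $(\Phi^{*}\mathcal{S}_N,\Psi^{*}\mathcal{S}_N)$ for two spin maps, each only locally defined, but such that $\mathcal{S}_M\widehat{\otimes}\bigl(\Phi^{*}\mathcal{S}_N\oplus(\Psi^{*}\mathcal{S}_N)^{\mathrm{op}}\bigr)$ is a globally defined Dirac bundle on $M$ (and likewise $\mathcal{S}_{M}\widehat{\otimes}V(\Phi^{*}\mathcal{S}_N,\Psi^{*}\mathcal{S}_N)$ on $\mathrm{d}M$). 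Your proposal silently passes to a setting where $N$ is spin, which strictly narrows the lemma. The output pair $(\Phi^{*}E^{+},\Phi^{*}E^{-})$ also does not match the paper's definition of a relative Gromov--Lawson pair (which is a pair of pullbacks of $\mathcal{S}_N$ by two spin maps, not a single pullback of a $\mathbb{Z}/2$-graded bundle). Finally, the Atiyah--Singer computation on $\mathrm{d}M$ is correct in outcome but, when $\mathrm{d}M$ is not spin, must be performed for the globally defined twisted operator as just described rather than via the formula $\int_{\mathrm{d}M}\widehat{A}(\mathrm{d}M)\wedge\mathrm{ch}(V)$, which presupposes a global spinor bundle on $\mathrm{d}M$; this is not mere bookkeeping.
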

With this lemma, we are now ready to deduce Theorem \ref{thm:IDS-long-neck-principle}.
\begin{proof}[Proof of Theorem \ref{thm:IDS-long-neck-principle}]
Assume, by contradiction, that ${\rm deg}_{\widehat{A}}(\Phi)\neq 0$. Moreover, restating the hypotheses of the theorem, we have 
\begin{itemize}
\item[(iv)] 
 $\mu - |J|_{g_M} \geq \frac{1}{2} ({\rm scal}_N\circ \Phi)\cdot \|\Lambda^2{\rm d}\Phi\|$ on ${\rm supp}({\rm d}\Phi)$; 
\item[(v)] there exists an $\varepsilon>0$ such that 
\[
{\rm dist}_{g_M}({\rm supp}({\rm d}\Phi),\partial M) \geq \omega:=\pi\sqrt{\frac{m-1}{2m\mathcal{Q}}}+\varepsilon.
\]
\end{itemize}
Then we choose a spacetime relative Gromov-Lawson pair $(\mathcal{E},\mathcal{F})$ satisfying the conditions (i) to (iii) from Lemma \ref{lem:relative-GL-pair}.

We shall define a sequence of bounded Lipschitz functions $f_j$ on $M$, which satisfies a certain differential inequality and have the property that $f_j\to +\infty$ on $\partial M$ as $j\to +\infty$, in the following way as in \cite{HKKZ23} (see also \cite{Liu23}). Let $r(p)={\rm dist}_{g_M}(p,\partial M)$. Then for each $j$ we consider
\begin{equation}\label{eq:defn-potential}
\begin{aligned}
f_j(p)=\begin{cases}
\frac{\pi}{2\omega}\cot\left(\frac{\pi}{2\omega}r(p)+\frac{1}{j}\right)\quad &\text{if}\ r(p)\leq \frac{2\omega}{\pi}(\frac{\pi}{2}-\frac{1}{j}), \\
0\quad &\text{otherwise}.
\end{cases}
\end{aligned}
\end{equation}
Since ${\rm dist}_{g_M}({\rm supp}({\rm d}\Phi),\partial M)\geq \omega$, we have that $\left.f_j\right|_{{\rm supp}({\rm d}\Phi)}=0$. Fix a compact subset $\Omega\subset M^{\circ}$ such that for all sufficiently large $j$ we have
\begin{subequations}
\begin{align}
\frac{3}{2}f_j^2-|{\rm d} f_j|\geq 1\quad &\text{on}\ M\setminus\Omega,\label{eq:characteristic-1}\\
\frac{m-1}{m}(f_j^2-|{\rm d} f_j|)+\frac{\mathcal{Q}}{2}\geq C_{\varepsilon}\quad &\text{on}\ M, \label{eq:characteristic-2}
\end{align}
\end{subequations}
where $C_{\varepsilon}>0$ depends on $\varepsilon,m$ and $\mathcal{Q}$. From \cite[Lemma~2.1]{Liu23} and (iii) in Lemma \ref{lem:relative-GL-pair}, the corresponding Callias operator subject to the sign $s=-1$ satisfies
\begin{equation}\label{eq:non-vanishing-of-index}
{\rm ind}(\widetilde{\mathcal{D}}^{f_j,-1})={\rm indrel}(M;\mathcal{E},\mathcal{F})\neq 0.
\end{equation}
In particular, we may obtain a non-trivial element $u_j\in\ker(\widetilde{\mathcal{D}}^{f_j,-1})$ for each $j$. According to \eqref{eq:energy-momentum}, \eqref{eq:zeroterm-relative-GL}, \eqref{spectralestimate}, and Cauchy-Schwarz inequality, we have
\begin{equation}\label{eq:3.13}
\begin{aligned}
& -\int_{\partial M} \left(\frac{1}{2}(m-1)H_{\partial M} -\frac{1}{2} |k(\nu,\cdot)^{\top}| + \frac{m-1}{m} f_j \right)|u_j|^2 dA\\
\geq & \int_M |\widetilde{\nabla}^{f_j} u_j|^2 dV + \frac{1}{2} \int_M (\mu - |J|_{g_M})|u_j|^2 dV +\int_M \langle \mathscr{R}^{\mathcal{E}\oplus \mathcal{F}} u_j, u_j\rangle dV\\
& +\frac{m-1}{m} \int_M (f_j^2 - |{\rm d}f_j| )|u_j|^2 dV.
\end{aligned}
\end{equation}
Hence the boundary term of \eqref{eq:3.13} is non-positive when $j\to +\infty$. Similar to \cite{HKKZ23} (also \cite{Liu23}), using the following relation  
\begin{equation}\label{eq:boundary-identity}
\begin{aligned}
\int_{\partial M}\langle c(\nu^{\flat}) u_j, f_j\sigma_1 u_j\rangle dA 
&=-\int_{M}\left(2f_j^2|u_j|^2+\langle u_j, c({\rm d} f_j)\sigma_1 u_j\rangle\right) dV,
\end{aligned}
\end{equation}
the inequality \eqref{eq:characteristic-1} together with Cauchy-Schwarz inequality, the boundary condition of \eqref{eq:boundary-value-problem} and the property of $\left. f_j\right|_{\partial M}$, we have
\begin{equation}\label{eq:3.18}
\begin{aligned}
\int_{M\setminus\Omega}\left(\frac{1}{2}f_j^2+1\right)|u_j|^2 dV & \leq \int_{M\setminus \Omega}(2f_j^2-| {\rm d} f_j|)|u_j|^2 dV \\
& \leq\int_{\Omega}(|{\rm d} f_j|-2f_j^2)|u_j|^2 dV.
\end{aligned}
\end{equation}
Note that $\max_{\Omega}|u_j|\neq 0$, otherwise this estimate implies that $u_j$ vanishes globally. Therefore we may assume that $\max_{\Omega}|u_j|=1$ by appropriate rescaling. From \eqref{eq:characteristic-2} and \eqref{eq:3.18},
\begin{equation}\label{eq:bound-of-spinor}
\int_{\Omega}|u_j|^2 dV + \int_{M\setminus\Omega}\left(\frac{1}{2}f_j^2+1\right)|u_j|^2 dV \leq \left(\frac{m\mathcal{Q}}{2(m-1)}+1\right)|\Omega|.
\end{equation}
Let 
\[
  \Upsilon_j=\min\limits_{\partial M}\left(\frac{m-1}{m} f_j-\frac{1}{2}\left| (m-1)H_{\partial M} - |k(\nu,\cdot)^{\top}| \right| \right). 
\]
Then $\Upsilon_j\to +\infty$ as $j\to +\infty$. Applying \eqref{eq:3.13}, \eqref{eq:characteristic-2} and (i) in Lemma \ref{lem:relative-GL-pair}, we have 
\begin{equation}\label{eq:bound-of-first-derivative}
\begin{aligned}
&\int_{M}\big|\widetilde{\nabla}^{f_j} u_j \big|^2 dV +\int_{\partial M}\Upsilon_j|u_j|^2 dA \\
\leq & \int_{M}\big|\widetilde{\nabla}^{f_j} u_j\big|^2 dV + \int_{\partial M} \left(\frac{1}{2}(m-1)H_{\partial M} -\frac{1}{2} |k(\nu,\cdot)^{\top}| + \frac{m-1}{m} f_j \right)|u_j|^2 dA\\
\leq & - \frac{m-1}{m} \int_{M} (f_j^2-|{\rm d} f_j|) |u_j|^2 dV -\int_{M} \left( \frac{1}{2} (\mu-|J|_{g_M})|u_j|^2 +\langle \mathscr{R}^{\mathcal{E}\oplus\mathcal{F}} u_j, u_j \rangle \right) dV \\
\leq &\int_{M} \left( \frac{\mathcal{Q}}{2} -C_{\varepsilon} \right) |u_j|^2 dV + \int_{M} \left(\frac{1}{2} \left| \mu-|J|_{g_M} \right| + \| \mathscr{R}^{\mathcal{E}\oplus\mathcal{F}} \|_{\infty}  \right) |u_j|^2 dV \\ 
\leq &  C_1
\end{aligned}
\end{equation}
for some constant $C_1$ independent of $j$. From \eqref{eq:bound-of-spinor} and \eqref{eq:bound-of-first-derivative}, the sequence $u_j$ is uniformly bounded in $H^1(M)$. Thus $u_j$ weakly subconverges to a function $u$ in $H^1(M)$ with strong convergence in $H^s(M)$ for any $s\in [\frac{1}{2},1)$.
Since the trace map $\tau: H^s(M)\to H^{s-\frac{1}{2}}(\partial M)$ is continuous,
$u_j$ converges subsequentially to $\tau(u)$ in $L^2(\partial M)$. However, since $\Upsilon_j\to +\infty$ we find that \eqref{eq:bound-of-first-derivative} yields $\tau(u)=0$ on $\partial M$, and hence $u\in H_0^1(M^{\circ})$. 
Let $U=\left\{p\in M\big|{\rm d}_p\Phi\neq 0\right\}$ and $U_{\delta}=\left\{p\in U\big| \, \|\Lambda^2{\rm d}\Phi\|(p)<\delta\right\}$ for suitable $0<\delta<1$. Then using our spectral estimate \eqref{eq:3.13}, $\mathscr{R}^{\mathcal{E}\oplus\mathcal{F}}=0$ on $M\setminus U$ together with \eqref{eq:characteristic-2}, we obtain
\begin{equation}\label{eq:estimateinequality}
\begin{aligned}
& -\int_{\partial M} \left(\frac{1}{2}(m-1)H_{\partial M} -\frac{1}{2} |k(\nu,\cdot)^{\top}| + \frac{m-1}{m} f_j \right)|u_j|^2 dA \\
\geq &  \int_{U} \left( \frac{1}{2} (\mu-|J|_{g_M}) |u_j|^2 + \langle \mathscr{R}^{\mathcal{E}\oplus\mathcal{F} } u_j , u_j \rangle  \right) dV \\
& +  \int_{M\setminus U} \left( \frac{m-1}{m}(f_j^2-|{\rm d} f_j|) + \frac{\mathcal{Q}}{2} \right)  |u_j|^2 dV \\
\geq & \int_{U}  \left( \frac{1}{2} (\mu-|J|_{g_M}) - \frac{1}{4} ({\rm scal}_N\circ \Phi) \cdot \|\Lambda^2{\rm d}\Phi\| (p) \right) |u_j|^2 dV +  \int_{M\setminus U} C_{\varepsilon} |u_j|^2 dV \\
\geq & \int_{\Omega\cap U_{\delta}} ( \frac{1}{2}(\mu-|J|_{g_M}) - C_{\delta} ) |u_j|^2 dV + \int_{\Omega \setminus U} C_{\varepsilon} |u_j|^2 dV.
\end{aligned}
\end{equation}
Here we have chosen a sufficiently small $\delta$ such that $C_{\delta}=\frac{1}{4}({\rm scal}_N\circ \Phi)\cdot \|\Lambda^2{\rm d}\Phi\|(p)<\frac{1}{2}(\mu-|J|_{g_M})$ for all $p\in \Omega \cap U_{\delta}$. Furthermore, taking the limit as $j\to +\infty$ in \eqref{eq:estimateinequality} with weak lower semi-continuity of the $H^1$-norm, Fatou's lemma, and strong convergence in $L^2$, we get
\begin{equation}
0 \geq \int_{\Omega\cap U_{\delta}} ( \frac{1}{2}(\mu-|J|_{g_M}) - C_{\delta} ) |u|^2 dV + \int_{\Omega \setminus U} C_{\varepsilon} |u|^2 dV.
\end{equation}
We conclude that the last two integrands are positive since $\max_{\Omega}|u|=1$. Thus we arrive at a contradiction and hence complete the proof of Theorem \ref{thm:IDS-long-neck-principle}. 
\end{proof}


\section{Band width estimates for initial data sets}\label{sec:band-width-estimates}

Now we will turn our attention to the study of bands. We begin by reviewing the concept of Gromov's bands \cite{Gro18} to deal with our results.
\begin{definition}\label{defn:band}
A \textit{band} is a compact manifold $M$ with boundary $\partial M=\partial_{-}M\sqcup \partial_{+}M$. The \textit{width} of a Riemannian band $(M, g_M)$, denoted by ${\rm width}(M, g_M)$, is the distance between $\partial_{-}M$ and $\partial_{+}M$ with respect to $g_M$.
\end{definition}

The following relevant definitions and examples of bands of infinite vertical $\widehat{A}$-area and $\mathcal{KO}$-bands can be found in \cite{CZ22} and \cite{Zei20,Zei22} respectively.
\begin{definition}\label{defn:infinite-vertical-Ahatarea}
A band $M$ is said to have \textit{infinite vertical $\widehat{A}$-area}, if for every $\epsilon>0$, there exists a Hermitian vector bundle $\mathcal{E}\to M$ such that $\|R^\mathcal{E}\|_{\infty}<\epsilon$ and such that we have
\begin{equation}\label{eq:infinite-vertical-Ahatarea}
\int_{\partial_{-}M} \widehat{A}(\partial_{-}M) \wedge {\rm ch}(\left.\mathcal{E}\right|_{\partial_{-}M}) \neq 0.
\end{equation}
\end{definition}

\begin{example}
All overtorical bands and $\widehat{A}$-overtorical bands are of infinite vertical $\widehat{A}$-area. On the other hand, $M=N\times [-1,1]$ is a band of infinite vertical $\widehat{A}$-area, where $N$ is a closed even-dimensional spin manifold with infinite $\widehat{A}$-area.
\end{example}

\begin{definition}\label{defn:KO-band}
A band $M$ is called a \textit{$\mathcal{KO}$-band} if $M$ is spin and admits a flat bundle $\mathcal{E}\to M$ of finitely generated projective Hilbert $\mathcal{A}$-modules for some unital Real ${\rm C}^*$-algebra $\mathcal{A}$ such that the twisted Dirac operator on $\partial_{\pm} M$ has non-vanishing index ${\rm ind}(\slashed{D}_{\partial_{-}M,\left.\mathcal{E}\right|_{\partial_{-}M}})\neq 0\in {\rm KO}_{n-1}(\mathcal{A})$.
\end{definition}

\begin{example}
All overtorical bands and $\widehat{A}$-overtorical bands are $\mathcal{KO}$-bands. On the other hand, $M=N\times [-1,1]$ is a $\mathcal{KO}$-band, where $N$ is a closed spin manifold with Rosenberg index $\alpha(N)\neq 0$.
\end{example}

\begin{theorem}\label{thm:General-IDS-band-width-estimate}
Let $(M, g_M, k)$ be a compact $m$-dimensional initial data set with boundary such that $\mu-|J|_{g_M}\geq \mathcal{Q} >0$ and $(M, g_M)$ be a Riemannian band, where $m\geq 3$ odd and $\mathcal{Q}$ is a constant. 
Suppose that $M$ has a smooth closed hypersurface $\Sigma$ separating $M$ into two parts $M_{-}$ and $M_{+}$ such that ${\rm dist}_{g_M}(p,\partial_{-}M)={\rm dist}_{g_M}(p,\partial_{+}M)$ for every $p\in\Sigma$. Assume that ${\rm tr}_{g_M}k$ has different signs or equals to zero on $M_{\pm}$.
For any $\epsilon>0$, if there exists a Hermitian bundle $\mathcal{E}$ such that $\|\mathscr{R}^{\mathcal{E}}\|_{\infty}<\epsilon$ and the kernel of the associated Callias operator $\widetilde{\mathcal{D}}^{f,s}$ subject to $s(\partial_{\pm} M)=\mp 1$ does not vanish, then
\begin{equation}
{\rm width}(M, g_M, k) \leq 2\pi \sqrt{\frac{m-1}{2m\mathcal{Q}}}.
\end{equation}
\end{theorem}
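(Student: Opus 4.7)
Following the Callias operator strategy of Theorem~\ref{thm:IDS-long-neck-principle}, I would argue by contradiction. If ${\rm width}(M, g_M, k) > 2\pi\sqrt{(m-1)/(2m\mathcal{Q})}$, the equidistance property of $\Sigma$ gives ${\rm width}(M, g_M, k) = 2\min_{p\in\Sigma}{\rm dist}(p, \partial_- M)$, so I may fix
\[
\omega \in \left(\pi\sqrt{\tfrac{m-1}{2m\mathcal{Q}}},\ \min_{p\in\Sigma}{\rm dist}(p, \partial_- M)\right).
\]
After relabeling if necessary, assume ${\rm tr}_{g_M}k \geq 0$ on $M_-$ and ${\rm tr}_{g_M}k \leq 0$ on $M_+$, and write $r_\pm(p) = {\rm dist}(p, \partial_\pm M)$.

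The central construction is a Lipschitz potential with opposite signs on the two halves of $M$, paired with the corresponding choice of boundary sign. Define $F_j\colon M \to \mathbb{R}$ by $+\frac{\pi}{2\omega}\cot(\frac{\pi}{2\omega}r_-(p) + \frac{1}{j})$ on the cotangent regime in $M_-$, by $-\frac{\pi}{2\omega}\cot(\frac{\pi}{2\omega}r_+(p) + \frac{1}{j})$ on the analogous regime in $M_+$, and by $0$ elsewhere. Since $\omega < \min_\Sigma r_\pm$, both pieces vanish in a neighborhood of $\Sigma$, so $F_j$ is globally Lipschitz, and by construction $F_j\,{\rm tr}_{g_M}k \geq 0$ on all of $M$. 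The same ODE computation as in the proof of Theorem~\ref{thm:IDS-long-neck-principle} yields, for $\varepsilon$ small enough, the pointwise bounds
\[
|F_j|^2 - |{\rm d}F_j| + \tfrac{m\mathcal{Q}}{2(m-1)} \geq C_\varepsilon > 0 \text{ on } M, \quad \tfrac{3}{2}|F_j|^2 - |{\rm d}F_j| \geq 1 \text{ on } M\setminus\Omega
\]
for a fixed $\Omega \Subset M^\circ$, while $|F_j|\to +\infty$ uniformly on $\partial_\pm M$. Taking the boundary sign $s=-1$ on $\partial_-M$ and $s=+1$ on $\partial_+M$ (admissible since the involution $\sigma$ in Example~\ref{exmp:odd-band} is globally defined), the boundary integrand in the signed spectral estimate \eqref{eq:signed-spectral-estimate} becomes $\tfrac{\theta_+}{2} + \tfrac{m-1}{m}F_j$ on $\partial_-M$ and $\tfrac{\theta_-}{2} - \tfrac{m-1}{m}F_j$ on $\partial_+M$; both diverge to $+\infty$ as $j\to\infty$.

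Invoking the hypothesis with $\varepsilon$ sufficiently small, I would pick a non-trivial $u_j \in \ker(\mathcal{B}_{F_j, s})$, using the potential-independence of the Callias index (Lemma~3.8 of \cite{CZ22}) to deduce this for the Lipschitz $F_j$. Plugging $u_j$ into Proposition~\ref{pro:spectral-estimate}, the argument now mirrors Theorem~\ref{thm:IDS-long-neck-principle}: the boundary identity \eqref{eq:boundary-identity} applied on $\partial_\pm M$ together with the pointwise bounds on $F_j$ yields uniform $L^2$ control of $|u_j|$ on $M\setminus\Omega$ after normalizing $\max_\Omega|u_j|=1$, and in turn a uniform $H^1(M)$-bound. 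A subsequential weak $H^1$/strong $L^2$ limit $|u|$ satisfies $|u|=0$ on $\partial M$ (forced by the divergence of the boundary coefficient on each component), so $|u|\in H^1_0(M^\circ)$; passing to the limit in the spectral inequality via weak lower semicontinuity and Fatou gives $0\geq C_\varepsilon\int_M|u|^2 > 0$, a contradiction with $\max_\Omega|u|=1$. The main technical difficulty is the coordinated sign choice: $F_j$ must be oppositely signed on $M_\pm$ to ensure $F_j\,{\rm tr}_{g_M}k \geq 0$, and $s$ must be oppositely chosen on $\partial_\pm M$ so that the boundary integrand still diverges to $+\infty$ on each component. This joint compatibility is precisely what the assumption of opposite signs of ${\rm tr}_{g_M}k$ on $M_\pm$ makes possible.
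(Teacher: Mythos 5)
Your proposal follows essentially the same route as the paper: you choose a cotangent-type Lipschitz potential supported in the two collar regions near $\partial_\pm M$, aligned in sign with $\operatorname{tr}_{g_M}k$ so that $f\operatorname{tr}_{g_M}k \geq 0$ throughout, pair this with opposite boundary signs $s$ on the two boundary components, and feed a nontrivial kernel element into Proposition~\ref{pro:spectral-estimate} to force a contradiction as $j\to\infty$. The differences from the paper's proof are cosmetic: you flip both the potential's sign on $M_\pm$ and the choice of $s$ (the paper takes $f_{\varepsilon,j}<0$ near $\partial_-M$, $>0$ near $\partial_+M$, with $s(\partial_\pm M)=\mp1$; you take the mirrored convention), which leaves the resulting inequality unchanged by the band's $M_-\leftrightarrow M_+$ symmetry. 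One genuinely useful addition on your side is the explicit observation that equidistance of $\Sigma$ forces ${\rm width}=2\min_{p\in\Sigma}{\rm dist}(p,\partial_-M)$ (the inequality ${\rm width}\leq 2\,{\rm dist}(p,\partial_-M)$ for every $p\in\Sigma$ comes from concatenating geodesics through $p$, and the reverse follows from a minimizing geodesic between boundaries crossing $\Sigma$); the paper leaves this tacit, yet it is exactly what guarantees the two cotangent supports lie inside $M_-$ and $M_+$ respectively and are disjoint, so that the sign alignment $F_j\operatorname{tr}_{g_M}k\geq 0$ actually holds. One small caveat, shared with the paper: you invoke potential-independence of the \emph{index} to obtain $\ker(\mathcal{B}_{F_j,s})\neq 0$ for every $j$, whereas the hypothesis as written only says the \emph{kernel} is nonzero for some fixed $f$ --- kernels are not potential-independent, only the index is, so the hypothesis really needs to be read as ${\rm ind}(\mathcal{B}_{f,s})\neq 0$ (which is how the two applications supply it). Also, when writing the final contradiction as $0\geq C_\varepsilon\int_M |u|^2$, you should make explicit that $C_\varepsilon$ has been adjusted to absorb the curvature defect $\tfrac{m}{m-1}\|\mathscr{R}^{\mathcal{E}}\|_\infty$; this is what the hypothesis $\|\mathscr{R}^{\mathcal{E}}\|_\infty<\varepsilon$ with $\varepsilon$ small is for, and the paper tracks it more carefully through the regions $K_\varepsilon$ and $\Omega\setminus K_\varepsilon$.
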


\begin{proof}[Proof of Theorem \ref{thm:General-IDS-band-width-estimate}]
Assume, by contradiction, that there exists an $\varepsilon>0$ such that 
\[
  {\rm width}(M, g_M) \geq \omega:= 2\pi \sqrt{\frac{m-1}{2m\mathcal{Q}}} + \varepsilon.
\]
We define a sequence of functions $f_{\varepsilon, j}$ on $M$ in the following. Let $r_{\pm}(p)={\rm dist}_{g_M}(p,\partial_{\pm}M)$, then for each $j$ we consider
\begin{equation}\label{eq:IDS-band-width-defn-potential}
\begin{aligned}
f_{\varepsilon, j}(p)=\begin{cases}
-\frac{\pi}{\omega}\cot\left(\frac{\pi}{\omega}r_{-}(p)+\frac{1}{j}\right)\quad &\text{if}\ r_{-}(p)\leq \frac{\omega}{\pi} ( \frac{\pi}{2} -\frac{1}{j} ),\\
\frac{\pi}{\omega}\cot\left(\frac{\pi}{\omega}r_{+}(p)+\frac{1}{j}\right)\quad &\text{if}\ r_{+}(p)\leq \frac{\omega}{\pi} ( \frac{\pi}{2} -\frac{1}{j} ),\\
0\quad &\text{otherwise}.
\end{cases}
\end{aligned}
\end{equation}
Let $K_{\varepsilon}$ denote the region defined by the third case in \eqref{eq:IDS-band-width-defn-potential}.
Note that $f_j$ is bounded Lipschitz on $M$ and $f_{\varepsilon, j} \to \pm \infty$ on $\partial_{\pm} M$ when $j\to + \infty$. Fix a compact subset $\Omega\subset M^{\circ}$ such that for all sufficiently large $j$ we have 
\begin{subequations}
\begin{align}
\frac{3}{2}f_{\varepsilon, j}^2-|{\rm d} f_{\varepsilon, j}|\geq 1\quad &\text{on}\ M\setminus\Omega,\label{eq:IDS-band-width-characteristic-1}\\
\frac{m-1}{m} (f_{\varepsilon, j}^2-|{\rm d} f_{\varepsilon, j}|) + \frac{\mathcal{Q}}{2}\geq C_{\varepsilon}\quad &\text{on}\ M, \label{eq:IDS-band-width-characteristic-2}
\end{align}
\end{subequations}
where $C_{\varepsilon}>0$ depends on $\varepsilon,m$ and $\mathcal{Q}$. Now we consider the Callias operator $\widetilde{\mathcal{D}}^{f_{\varepsilon,j},s}$ associated to the Dirac bundle from Example \ref{example:odd-bands} and with the potential $f_{\varepsilon,j}$ subject to the boundary conditions coming from the choice of signs $s(\partial_{\pm}M)=\mp 1$. Since $\ker(\widetilde{\mathcal{D}}^{f_{\varepsilon,j},s}) \neq 0$, we can choose a non-trivial element $u_{\varepsilon, j} \in \ker(\widetilde{\mathcal{D}}^{f_{\varepsilon,j},s})$ for each $j$.
Applying \eqref{eq:null-expansion}, \eqref{eq:zeroterm-odd-bands}, and \eqref{eq:odd-bands-spectral-estimate} in Proposition \ref{pro:odd-band-spectral-estimates}, we find
\begin{equation}\label{eq:band-spectral-estimate}
\begin{aligned}
& \int_{\partial_{-} M} \left(\frac{m-1}{m} f_{\varepsilon, j} - \frac{1}{2} \theta_{\partial_{-} M}^{+} \right) |u_{\varepsilon, j}|^2 dA - \int_{\partial_{+} M} \left( \frac{m-1}{m} f_{\varepsilon, j} + \frac{1}{2} \theta_{\partial_{+} M}^{-} \right) |u_{\varepsilon, j}|^2 dA \\
 \geq & \int_M \left( | \widetilde{\nabla}^{f_{\varepsilon,j}} u_{\varepsilon, j} |^2  + \frac{1}{2} (\mu-|J|_{g_M})|u_{\varepsilon, j}|^2 + \langle \mathscr{R}^{\mathcal{E}} u_{\varepsilon, j}, u_{\varepsilon, j} \rangle \right) dV \\
 & + \frac{m-1}{m} \int_M  (f_{\varepsilon, j}^2-|{\rm d} f_{\varepsilon, j}|+f_{\varepsilon, j} ({\rm tr}_{g_M}k)) |u_{\varepsilon, j}|^2 dV.
\end{aligned}
\end{equation}
Using the same arguments in the proof of Theorem \ref{thm:IDS-long-neck-principle} we have \eqref{eq:boundary-identity}-\eqref{eq:bound-of-first-derivative} for $u_{\varepsilon, j}$ and we may assume $\max_{\Omega}|u_{\varepsilon, j}|=1$. By \eqref{eq:bound-of-spinor} and \eqref{eq:bound-of-first-derivative}, the sequence $u_{\varepsilon, j}$ is uniformly bounded in $H^1(M)$, and thus it has a weak subsequential limit $u_{\varepsilon}$ in $H^1(M)$ with strong convergence in $H^s(M)$ for any $s\in [\frac{1}{2},1)$.
From the trace theorem, the trace map $\tau: H^s(M)\to H^{s-\frac{1}{2}}(\partial M)$ is continuous,
and therefore $u_{\varepsilon, j}$ converges subsequentially to $\tau(u_{\varepsilon})$ in $L^2(\partial M)$. However, since $\Upsilon_{\varepsilon, j}:=\min_{\partial M}\left( \frac{m-1}{m} |f_{\varepsilon, j}|-\frac{1}{2} | \theta_{\partial_{\pm} M}^{\mp}| \right)\to +\infty$, we find that \eqref{eq:bound-of-first-derivative} yields $\tau(u_{\varepsilon})=0$ on $\partial M$, and hence $u_{\varepsilon}\in H_0^1(M^{\circ})$. 
We may assume that ${\rm tr}_{g_M}k\leq 0$ on $M_{-}$ and ${\rm tr}_{g_M}k\geq 0$ on $M_{+}$ by the symmetry of the regions $M_{\pm}$ of a band. Then using \eqref{eq:band-spectral-estimate}, we obtain
\begin{equation}\label{eq:bandspectral}
\begin{aligned}
& \int_{\partial_{-} M} \left(\frac{m-1}{m} f_{\varepsilon, j} - \frac{1}{2} \theta_{\partial_{-} M}^{+} \right) |u_{\varepsilon, j}|^2 dA - \int_{\partial_{+} M} \left( \frac{m-1}{m} f_{\varepsilon, j} + \frac{1}{2} \theta_{\partial_{+} M}^{-} \right) |u_{\varepsilon, j}|^2 dA\\
\geq & \int_{K_{\varepsilon}} \left( \frac{1}{2} (\mu-|J|_{g_M})|u_{\varepsilon, j}|^2 + \langle \mathscr{R}^{\mathcal{E}} u_{\varepsilon, j}, u_{\varepsilon, j} \rangle \right) dV + \int_{M\setminus K_{\varepsilon}}\left( C_{\varepsilon}|u_{\varepsilon, j}|^2 + \langle \mathscr{R}^{\mathcal{E}} u_{\varepsilon, j}, u_{\varepsilon, j} \rangle\right) dV \\
\geq & \int_{K_{\varepsilon} } \left(\frac{1}{2}(\mu-|J|_{g_M})-\|\mathscr{R}^{\mathcal{E}}\|_{\infty}\right)|u_{\varepsilon, j}|^2 dV +\int_{M\setminus K_{\varepsilon}} \left(C_{\varepsilon}- \|\mathscr{R}^{\mathcal{E}}\|_{\infty}\right) |u_{\varepsilon, j}|^2 dV \\
\geq & \int_{\Omega\cap K_{\varepsilon}} \left(\frac{1}{2}(\mu-|J|_{g_M}) -\|\mathscr{R}^{\mathcal{E}}\|_{\infty}\right) |u_{\varepsilon, j}|^2 dV +\int_{\Omega\setminus K_{\varepsilon}} \left(C_{\varepsilon}- \|\mathscr{R}^{\mathcal{E}}\|_{\infty}\right) |u_{\varepsilon, j}|^2 dV.
\end{aligned}
\end{equation}
Then letting $j\to +\infty$ in \eqref{eq:bandspectral} for $u_{\varepsilon, j}$, we see from weak lower semi-continuity of the $H^1$-norm, Fatou's lemma, and strong convergence in $L^2$ that
\begin{equation}\label{eq:bandwidthspectral}
0 \geq  \int_{\Omega\cap K_{\varepsilon}} \left(\frac{1}{2}(\mu-|J|_{g_M}) -\|\mathscr{R}^{\mathcal{E}}\|_{\infty}\right) |u_{\varepsilon}|^2 dV +\int_{\Omega\setminus K_{\varepsilon}} \left( C_{\varepsilon}- \|\mathscr{R}^{\mathcal{E}}\|_{\infty}\right) |u_{\varepsilon}|^2 dV.
\end{equation}
Applying Fatou's lemma to \eqref{eq:bound-of-spinor} and \eqref{eq:bound-of-first-derivative} respectively, we find that $u_{\varepsilon}$ is uniformly bounded in $H^1(M)$. By passing to a subsequence, $u_{\varepsilon} \to u$ in $L^p(M)$ for $p\in [1, \frac{3}{2})$, as $\varepsilon\to 0$.
Note that $K=\cup_{\varepsilon} K_{\varepsilon}$ and $K\subset \Omega$. 
Taking the limit of \eqref{eq:bandwidthspectral} as $\varepsilon\to 0$ implies that
\begin{equation}
0 \geq \int_{K}\left(\frac{1}{2}(\mu-|J|_{g_M})-\|\mathscr{R}^{\mathcal{E}}\|_{\infty}\right)|u|^2 dV + \int_{\Omega\setminus K}\left( C_{\varepsilon} - \|\mathscr{R}^{\mathcal{E}}\|_{\infty}\right)|u|^2 dV.
\end{equation}
Therefore we obtain a contradiction because $\max_{\Omega}|u|=1$. This finishes the proof of Theorem \ref{thm:IDS-band-width-estimate}.
\end{proof}


\begin{proof}[Proof of Theorem \ref{thm:IDS-band-width-estimate}]
By Definition \ref{defn:infinite-vertical-Ahatarea} and Cauchy-Schwarz inequality, for any $\epsilon>0$, we now choose a Hermitian bundle $\mathcal{E}\to M$ satisfying \eqref{eq:infinite-vertical-Ahatarea} and such that the corresponding curvature endomorphism satisfies $\|\mathscr{R}^{\mathcal{E}}\|_{\infty}\leq c_m \|R^{\mathcal{E}}\|_{\infty} < \epsilon$, where $c_m$ is a dimensional constant.
According to \cite[Corollary~3.10]{CZ22} and \eqref{eq:infinite-vertical-Ahatarea}, the index of the Callias operator subject to $s(\partial_{\pm} M)=\mp 1$ is  
\begin{equation}\label{eq:IDS-band-width-non-vanishing-of-index}
{\rm ind}(\widetilde{\mathcal{D}}^{f_{\varepsilon,j},s})={\rm ind}(\slashed{D}_{\partial_{-}M, \left.\mathcal{E}\right|_{\partial_{-}M}}) = \int_{\partial_{-}M} \widehat{A}(\partial_{-}M) \wedge {\rm ch}(\left.\mathcal{E}\right|_{\partial_{-}M}) \neq 0.
\end{equation}
In particular, we have a non-trivial element $u_{\varepsilon, j} \in \ker(\widetilde{\mathcal{D}}^{f_{\varepsilon,j},s})$ for each $j$. Thus our assertion follows.
\end{proof}

\begin{proof}[Proof of Theorem \ref{thm:IDS-KO-band-width-estimate}]
We can define the generalized Callias-type operator $\widetilde{\mathcal{D}}^{f_{\varepsilon,j},s}$ with the choice of signs $s(\partial_{\pm} M)=\mp 1$, see \cite{Cec20,CZ21}. Then we have the similar spectral estimate \eqref{spectralestimate} and Theorem \ref{thm:General-IDS-band-width-estimate} holds for the flat bundle $\mathcal{E}$. Since $(M, g_M)$ is a $\mathcal{KO}$-band, there exists a flat bundle $\mathcal{E}$ over $M$ such that ${\rm ind}(\slashed{D}_{\partial_{-}M,\left.\mathcal{E}\right|_{\partial_{-}M}})\neq 0$. We see from \cite[Corollary~3.10]{CZ22} that
\[
  {\rm ind}(\widetilde{\mathcal{D}}^{f_{\varepsilon,j},s})= {\rm ind}(\slashed{D}_{\partial_{-}M,\left.\mathcal{E}\right|_{\partial_{-}M}})\neq 0.
\]
By Theorem \ref{thm:General-IDS-band-width-estimate}, we finish the proof of Theorem \ref{thm:IDS-KO-band-width-estimate}.
\end{proof}


\section{Width inequalities of geodesic collar neighborhood}\label{sec:width-inequality-of-geodesic-collar-neighborhood}
The purpose of this section is to prove the width inequality in Theorem \ref{thm:IDS-width-inequality}.
The following fact describes the fundamental property of collar neighborhood on an even-dimensional compact spin initial data set that admits infinite relative $\widehat{A}$-area condition. The following basic fact is well-known, see for example \cite[Lemma~6.9]{CZ22}.
\begin{lemma}\label{lem:technical-lemma}
Let $(M,g_M, k)$ be an even-dimensional compact spin initial data set of infinite relative $\widehat{A}$-area and $U\cong \partial M\times (-1,0]\subseteq M$ be an open collar neighborhood. Then for any sufficiently small $\varepsilon_1,\varepsilon_2>0$, there exists a pair of Hermitian vector bundles $(\mathcal{E},\mathcal{F})$ on $M$ such that
\begin{itemize}
\item[(i)] there exists a parallel unitary bundle isomorphism $\Phi:\left.\mathcal{E}\right|_{U_{\varepsilon_1}}\to \left.\mathcal{F}\right|_{U_{\varepsilon_1}}$, where $U_{\varepsilon_1}$ corresponds to $\partial M\times (-1+\varepsilon_1,0]$ in the tubular neighborhood;
\item[(ii)] $\|R^{\mathcal{E}\oplus\mathcal{F}}\|_{\infty}<\varepsilon_2$;
\item[(iii)] ${\rm ind}(\widetilde{\mathcal{D}}^{f_j,-1})\neq 0$, where $\widetilde{\mathcal{D}}^{f_j,-1}$ is the associated Callias operator subject to the boundary condition coming from the sign $s=-1$.
\end{itemize}
\end{lemma}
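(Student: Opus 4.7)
The plan is to reduce this almost entirely to the analogous statement in the Riemannian setting, namely Lemma~6.9 of \cite{CZ22}, since neither the construction of the Gromov--Lawson pair nor the evaluation of the Callias index depends on the symmetric tensor $k$. More precisely, the unknown $k$ enters only through the Dirac--Witten modification $\widetilde{\mathcal{D}} = \mathcal{D}+\tfrac{1}{2}({\rm tr}_{g_M}k)\sigma$ on the analytic side, while all three conclusions (i)--(iii) are purely topological/geometric statements about Hermitian bundles and the classical Dirac index.

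First I would unpack the hypothesis. ``Infinite relative $\widehat{A}$-area'' for $(M,g_M,k)$ should mean that the closed double $\mathrm{d}M = M\cup_{\partial M}M^-$ has infinite $\widehat{A}$-area: for every $\varepsilon>0$ there is a Hermitian bundle $V\to \mathrm{d}M$ with a metric connection such that $\|R^V\|_\infty<\varepsilon$ and $\int_{\mathrm{d}M}\widehat{A}(T\mathrm{d}M)\wedge \mathrm{ch}(V)\neq 0$. Fix a tubular neighborhood of $\partial M$ in $\mathrm{d}M$ isomorphic to $\partial M\times(-1,1)$, containing $U$ as the negative half.

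Second I would produce $(\mathcal{E},\mathcal{F})$ from $V$. Choose the bundle $V$ above with $\|R^V\|_\infty$ so small that, after parallel transport along the normal $(-1,1)$-direction inside the tube, $V$ is unitarily trivial over the collar up to an error that can be absorbed by a small gauge transformation supported in the cylinder. Then set $\mathcal{E}:=V|_M$ and $\mathcal{F}:=V|_{M^-}$, viewed as a bundle on $M$ via the obvious diffeomorphism $M\cong M^-$. The parallel transport across $\partial M\times \{0\}$ supplies a unitary bundle isomorphism
\begin{equation*}
\daleth\colon \mathcal{E}|_{U_{\varepsilon_1}}\xrightarrow{\;\cong\;}\mathcal{F}|_{U_{\varepsilon_1}},
\end{equation*}
and by modifying the connection in a thin sub-collar (away from $U_{\varepsilon_1}$) we may arrange that $\daleth$ is parallel, giving (i). The modification costs a uniformly bounded amount of curvature, so by choosing the original $\|R^V\|_\infty$ small enough we obtain $\|R^{\mathcal{E}\oplus\mathcal{F}}\|_\infty<\varepsilon_2$, which is (ii).

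Third I would verify (iii) by a direct index computation, which is where $k$ is already invisible. The relative index is by construction
\begin{equation*}
\mathrm{indrel}(M;\mathcal{E},\mathcal{F})=\mathrm{ind}(\slashed{D}_{\mathrm{d}M,\,V(\mathcal{E},\mathcal{F})}),
\end{equation*}
and the clutching $V(\mathcal{E},\mathcal{F})$ is, by our choice of $\daleth$, isomorphic to the original $V$ on $\mathrm{d}M$. The Atiyah--Singer index theorem then gives $\int_{\mathrm{d}M}\widehat{A}(T\mathrm{d}M)\wedge\mathrm{ch}(V)\neq 0$. Combining this with \eqref{eq:index-coincide-relative-index} yields $\mathrm{ind}(\mathcal{B}_{f_j,-1})\neq 0$ for any admissible potential (in particular the ones built later from the distance to the boundary).

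The only real subtlety, and the step I would expect to need the most care, is arranging $\daleth$ to be \emph{parallel} (not merely unitary) on $U_{\varepsilon_1}$ while keeping $\|R^{\mathcal{E}\oplus\mathcal{F}}\|_\infty$ below the prescribed $\varepsilon_2$. This is exactly the content of \cite[Lemma~6.9]{CZ22}, whose argument transplants verbatim since it involves only the bundles, their metric connections, and the closed double $\mathrm{d}M$, none of which are affected by the initial data tensor $k$.
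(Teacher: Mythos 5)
Your proposal is correct and takes essentially the same route as the paper, which gives no argument of its own and simply cites \cite[Lemma~6.9]{CZ22}; your observation that $k$ enters only the zeroth-order Dirac--Witten perturbation $\widetilde{\mathcal{D}}=\mathcal{D}+\tfrac{1}{2}({\rm tr}_{g_M}k)\sigma$, which has no bearing on conclusions (i)--(iii), is precisely the implicit justification for transplanting the Riemannian statement verbatim, and your double-manifold sketch is an accurate unpacking of what CZ22 do. The only minor slip is invoking \eqref{eq:index-coincide-relative-index} (the spin-map specialization) where \eqref{eq:GGL} is the relevant identity for a generic Gromov--Lawson pair; in either case \cite[Lemma~3.8]{CZ22} gives independence of the index from the potential, so the conclusion holds for the $f_j$ used later.
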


We next prove the following width inequality of the geodesic collar neighborhood of the boundary.

\begin{proof}[Proof of Theorem \ref{thm:IDS-width-inequality}]
The proof is similar to that of Theorem \ref{thm:IDS-long-neck-principle}. We only point out the necessary modifications. For all sufficiently small $d>0$ denoted by $\mathcal{N}_d$ the open geodesic collar neighborhood of $\partial M$ of width $d$. Suppose, by contradiction, that $\mathcal{N}_{\ell}$ exists and there exists an $\varepsilon>0$ such that $\ell \geq \omega:=\pi\sqrt{ \frac{m-1}{2m\mathcal{Q}} }+\varepsilon$. Fix $\Lambda\in (\omega, \ell)$. Then $K_{\Lambda}:=M\setminus \mathcal{N}_{\Lambda}$ is a compact initial data set with boundary.  
For each $j$, we can define a Lipschitz function $f_j$ as in the equation \eqref{eq:defn-potential}, which satisfy a certain differential inequality and have the property that $f_j\to +\infty$ at $\partial M$ as $j\to +\infty$. Observe that $f_j=0$ on $K_{\Lambda}$. 
Note that the double ${\rm d}M$ has infinite $\widehat{A}$-area if and only if $(M,\partial M)$ has infinite relative $\widehat{A}$-area \cite[Remark~6.6]{CZ22}. Then from Lemma \ref{lem:technical-lemma} and \cite[Corollary~3.9]{CZ22}, there exists a spacetime Gromov-Lawson pair $(\mathcal{E},\mathcal{F})$ and associated Dirac bundle $S\to M$ such that
\begin{itemize}
\item[(i)] $(\mathcal{E},\mathcal{F})$ and thus $S$ have support $K_{\Lambda}$;
\item[(ii)] $2\|\mathscr{R}^{\mathcal{E}\oplus \mathcal{F}}\|_{\infty}<\mu-|J|_{g_M}$ on $K_{\Lambda}^{\circ}$;
\item[(iii)] $\|\mathscr{R}^{\mathcal{E}\oplus \mathcal{F}}\|_{\infty}<C_{\varepsilon}$ on $M\setminus K_{\Lambda}^{\circ}$;
\item[(iv)] ${\rm ind}(\widetilde{\mathcal{D}}^{f_j,-1})={\rm indrel}(M;\mathcal{E},\mathcal{F})\neq 0$ for any potential $f_j$.
\end{itemize}
By (iv), we have a non-trivial element $u_j\in \ker(\widetilde{\mathcal{D}}^{f_j,-1})$. Then it follows from \eqref{eq:energy-momentum}, \eqref{eq:zeroterm-GL}, \eqref{spectralestimate}, Cauchy-Schwarz inequality that 
\begin{equation}\label{eq:gcnspectral}
\begin{aligned}
& -\int_{\partial M} \left(\frac{1}{2}(m-1)H_{\partial M} -\frac{1}{2} |k(\nu,\cdot)^{\top}| + \frac{m-1}{m} f_j \right)|u_j|^2 dA\\
\geq &  \int_M  |\widetilde{\nabla}^{f_j} u_j|^2 dV + \frac{1}{2} \int_M (\mu-|J|_{g_M})|u_j|^2 dV + \int_M \langle \mathscr{R}^{\mathcal{E}\oplus\mathcal{F}} u_j , u_j \rangle  dV\\
& +  \frac{m-1}{m}  \int_M  (f_j^2-|{\rm d} f_j|) |u_j|^2  dV\\
\geq & \int_{K_{\Lambda}^{\circ} } \underbrace{\left(\frac{1}{2}(\mu-|J|_{g_M})-\|\mathscr{R}^{\mathcal{E}\oplus\mathcal{F}}\|_{\infty}\right)}_{>0\ \text{by (ii)} }|u_j|^2 dV +\int_{M\setminus K_{\Lambda}^{\circ}} \underbrace{\left( C_{\varepsilon}-\|\mathscr{R}^{\mathcal{E}\oplus\mathcal{F}}\|_{\infty}\right)}_{>0 \ \text{by (iii)} }|u_j|^2 dV\\
\geq & \int_{\Omega\cap K_{\Lambda}^{\circ}} \left( \frac{1}{2}(\mu-|J|_{g_M})-\|\mathscr{R}^{\mathcal{E}\oplus\mathcal{F}}\|_{\infty}\right) |u_j|^2 dV +\int_{\Omega\setminus K_{\Lambda}^{\circ}} \left( C_{\varepsilon}- \|\mathscr{R}^{\mathcal{E}\oplus\mathcal{F}}\|_{\infty}\right) |u_j|^2 dV.
\end{aligned}
\end{equation}
Furthermore, from the same arguments in the proof of Theorem \ref{thm:IDS-long-neck-principle}, taking the limit of \eqref{eq:gcnspectral} as $j\to +\infty$ with weak lower semi-continuity of the $H^1$-norm, Fatou's lemma, and strong convergence in $L^2$ implies that
\begin{equation}
0\geq \int_{\Omega\cap K_{\Lambda}^{\circ}} \left( \frac{1}{2}(\mu-|J|_{g_M})-\|\mathscr{R}^{\mathcal{E}\oplus\mathcal{F}}\|_{\infty}\right) |u|^2 dV +\int_{\Omega\setminus K_{\Lambda}^{\circ}} \left( C_{\varepsilon}- \|\mathscr{R}^{\mathcal{E}\oplus\mathcal{F}}\|_{\infty}\right) |u|^2 dV.
\end{equation}
Since $\max_{\Omega}|u|=1$, the last two integrands are separately positive, and therefore a contradiction is obtained. We then finish the proof of Theorem \ref{thm:IDS-width-inequality}.
\end{proof}

\section*{Acknowledgements}
The author would like to express his gratitude to Professors Bo Liu and Zhenlei Zhang for their insightful discussions, and helpful comments on an earlier version of this manuscript, and constant encouragements.
The author is also grateful to the anonymous referee for valuable suggestions on the presentation of the manuscript.

\def\cprime{$'$}
\providecommand{\bysame}{\leavevmode\hbox to3em{\hrulefill}\thinspace}
\providecommand{\MR}{\relax\ifhmode\unskip\space\fi MR }
\providecommand{\MRhref}[2]{%
  \href{http://www.ams.org/mathscinet-getitem?mr=#1}{#2}
}
\providecommand{\href}[2]{#2}

\end{document}